\documentclass{amsart}
\usepackage{euscript}
\usepackage{a4wide}
\usepackage[T2A]{fontenc}
\usepackage[utf8]{inputenc}
\usepackage{amsmath}
\usepackage{style}

\title{Resonances sets of Schr\"{o}dinger operators}
\author{Yurii Belov and Pavel Gubkin}

\thanks{The work of PG in Sections 2, 3 and 6 was performed at the Saint Petersburg Leonhard Euler International Mathematical Institute and supported by the Ministry of Science and Higher Education of the Russian Federation (agreement no. 075–15–2025–343). The work of YB of Sections 4 and 5 is supported by the Ministry of Science and Higher Education of the Russian Federation in the framework of a scientific project under agreement No. 075-15-2025-013. YB and PG are winners of the BASIS competitions and would like to thank its sponsors and jury.}

\address{
    \begin{flushleft} 
		Yurii Belov: j\_b\_juri\_belov@mail.ru 
        \vspace{0.1cm}
        \\Department of Mathematics and Computer Science, St. Petersburg State University\\
        14th Line 29b, Vasilyevsky Island, St. Petersburg, Russia, 199178,
\end{flushleft}
}

\address{
	\begin{flushleft}
		Pavel Gubkin: gubkinpavel@pdmi.ras.ru, gubkin.pv@yandex.ru \\
        \vspace{0.1cm}
		 Department of Mathematics and Computer Science, St. Petersburg State University\\
         14th Line 29b, Vasilyevsky Island, St. Petersburg, Russia, 199178\\
         \vspace{0.1cm}
        St. Petersburg Department of Steklov Mathematical Institute, Russian Academy of Sciences\\ 
		Fontanka 27, 191023 St. Petersburg, Russia
	\end{flushleft}
}

\begin{document}
\begin{abstract}
    We prove that resonances of the \Schr operator with compactly supported potential can contain arbitrary subset of the angle $\{z: -\Im z > C |\Re z|\}$ that satisfies \Blaschke condition. We also establish sufficient conditions for the subsets of wider domains.
\end{abstract}
\maketitle

\section{Introduction}
In the present paper we consider one-dimensional \Schr operators $\H_q$ on the half-line $\R_+ = [0, +\infty)$ of the form
\begin{gather*}
    \H_q = -\frac{d^{2}}{dx^{2}} + q(x),
\end{gather*}
with real-valued compactly supported potentials $q\in L^1(\R_+)$. 
The \emph{Jost solution} $\psi(k,x)$ of $\H_q$ is defined as the unique solution of
\begin{gather*}
        -\psi''(k, x) + q(x) \psi (k, x) = k^{2} \psi(k, x),\qquad k\in\Cm, \qquad x\ge0,
\end{gather*}
that satisfies $\psi(k,x) = e^{ikx}$ for all large $x$.
The corresponding \emph{Jost function} is defined by
\begin{gather*}
    w(k) = \psi(k,0),\qquad k\in\Cm.
\end{gather*}
This function is entire, its zeroes in the lower half-plane $\{z\colon \Im z < 0\}$ are called \emph{resonances} of the operator $\H_q$, we denote the set of resonances by $\Res(q)$. Strictly speaking $\Res(q)$ is not a set but a multiset, where the multiplicity of each resonance equals the multiplicity of the corresponding zero. In the free case $q\equiv 0$ we have $\psi(k,x) = e^{ikx}$, $w(k) = 1$ and $\Res(q) = \emptyset$. When $q$ is nontrivial, the set $\Res(q)$ is infinite; together with the corresponding bound states (zeroes of $w$ with positive imaginary part) resonances uniquely determine the potential, see Theorem 1.1 in \cite{korotyaev2004inverse} {and the papers \cite{korotyaev2004stability}, \cite{Marletta2010}, \cite{Marletta2012} devoted to the stability of the inverse spectral problem}. Resonances can also be defined as the poles of the meromorphic continuation of the resolvent of $\H_q$, see Chapter 2 in the book \cite{DyatlovZworksiBook}.
\medskip

The techniques from complex analysis that relate the growth of entire function to the distribution of its zeroes provide valuable information about the location of resonances. For example, 
the classical \Blaschke condition 
\begin{gather}
    \label{eq: blaschke condition}
    \sum_{\lambda \in \Res(q)} \frac{|\Im \lambda|}{1 + |\lambda|^2} < \infty
\end{gather}
holds for every $q$ with compact support.
The counting function 
\begin{gather*}
N(r) = \#\Big\{z\colon w(z) = 0,\; |z| < r,\, \Im z < 0\Big\}
\end{gather*}
of $\Res(q)$ satisfies the relation $N(r) = \frac{2}{\pi}\ell_q r + o(r)$ as $r\to\infty$, where $\ell_q$ is the diameter of the essential support of the potential $q$, see the papers \cite{froese1997asymptotic} and \cite{zworski1987distribution} for the case of operator on the real line and \cite{korotyaev2004inverse} for the half-line. 
Furthermore, for every $\delta > 0$ we have the asymptotic relation 
\begin{gather}
    \label{eq: apart from a set of density zero}
    \#\Big\{z\colon w(z) = 0,\; |z| < r,\, \Arg(z)\notin [-\pi + \delta, -\delta]\Big\} = o(r),\qquad r\to\infty.
\end{gather}
In other words, apart from a
set of density zero, all the resonances of $\H_q$ are contained in arbitrarily small sectors
about the real axis. On the other hand, it is known that resonances cannot lie too close to the real line, see Theorem 3 in \cite{baranov2017branges} and Section 3 in \cite{hitrik1999bounds}: there always exists a logarithmic strip of the form
\begin{gather}
\label{eq: log strip}
    \big\{z\colon -\log(1 + |\Re z|)\le C\Im z\big\}
\end{gather}
that does not contain any resonances. In particular, for every $h > 0$, there is only a finite number of resonances in  the horizontal strip $\{z\colon -h < \Im z < 0\}$. 
\medskip

Another point of interest is the characterization problem of resonances sets.  The results mentioned above give  natural necessary conditions for a set to be a resonances set. Jost functions that correspond to compactly supported potentials are described in \cite{korotyaev2004inverse}, \cite{baranov2017branges}, see Theorem \ref{Thm jost functions} by E.\,Korotyaev below, however the explicit sufficient conditions for the resonances sets are not present in the literature. Resonances in every bounded domain are free parameters, see Theorem 1.2 in \cite{korotyaev2004inverse}. Theorem 3 in \cite{baranov2017branges} states that there exists a potential with infinitely many resonances in the logarithmic strip of the form \eqref{eq: log strip}, 
in Proposition 7 in the paper \cite{zworski1987distribution} M.\,Zworski constructed a smooth potential on the real line 
with infinitely many resonances on the imaginary line. In the present paper we study the following question:  
\begin{gather}
\label{eq: question}
    \textit{Let $\Lambda$ be a set of points in $\{z\colon \Im z < 0\}$. Does there exist $q$ such that $\Lambda\subset \Res(q)$?}
\end{gather}
\begin{Thm}\label{thm: angle}
If $\Lambda$ satisfies the classical \Blaschke condition $\sum_{\lambda\in \Lambda}\frac{|\Im\lambda|}{1 + |\lambda|^2} < \infty$ and for some $C > 0$ we have $\Lambda\subset\{z: -\Im z \geq C |\Re z|\}$ then 
there exists $q\in L^1(\R_+)$ with compact support such that $\Lambda\subset \Res(q)$.
\end{Thm}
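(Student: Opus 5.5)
The plan is to realize $\Lambda$ inside the zero set of a Jost function via the description of Jost functions in Theorem \ref{Thm jost functions}. Roughly, that theorem says an entire function of the form
\begin{gather*}
    w(k) = 1 + \int_0^{2\gamma} g(t)\, e^{2ikt}\,dt,\qquad g\in L^1(0,2\gamma)\ \text{real-valued},
\end{gather*}
is a Jost function of some compactly supported $q$, provided its zeros in $\{\Im k\ge 0\}$ are admissible: simple, on $i\R_+$, possibly also at $0$. I will aim for a $g$ with $\|g\|_{L^1}<1$. Then for $\Im k\ge0$ we get $|w(k)-1|\le \|g\|_{L^1}<1$, so $w$ has no zeros in the closed upper half-plane at all and the admissibility conditions hold trivially. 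Realness of $g$ gives $\overline{w(-\bar k)}=w(k)$, so the zero set is automatically symmetric under $k\mapsto-\bar k$. Hence it is enough to solve the moment problem
\begin{gather*}
    \int_0^{2\gamma} g(t)\, e^{2i\lambda t}\,dt = -1,\qquad \lambda\in\Lambda,
\end{gather*}
with real $g$ and $\|g\|_{L^1}<1$. Realness is handled by imposing the conjugate equations at $-\bar\lambda$; the enlarged set still lies in the same angle and satisfies the same Blaschke condition.

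The key feature of the angle is that $|e^{2i\lambda t}| = e^{2|\Im\lambda| t}$, and $|\Im\lambda|\asymp|\lambda|$ there. So the data $-1$ are tiny compared with the size of the exponentials near $t=2\gamma$. Moreover, in the angle the Blaschke condition is equivalent to $\sum 1/|\lambda|<\infty$, so $\Lambda$ is very sparse. I would first discard the finitely many $\lambda$ with $|\lambda|\le R$, to be absorbed at the end using the fact (Theorem 1.2 of \cite{korotyaev2004inverse}) that resonances in bounded sets are free parameters, or by a finite-dimensional correction. For the tail I would solve the moment problem by duality. By Hahn--Banach in $L^1$--$L^\infty$, a solution with $\|g\|_1\le 1-\varepsilon$ exists if and only if, for every finite sequence $(c_\lambda)$,
\begin{gather*}
    \Big|\sum_\lambda c_\lambda\Big| \le (1-\varepsilon)\,\sup_{t\in[0,2\gamma]}\Big|\sum_\lambda c_\lambda e^{2i\lambda t}\Big|.
\end{gather*}
A weak-$*$ limit argument then passes from finite subsets to all of $\Lambda$. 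The left side is the exponential sum at $t=0$. So the claim is a Remez/Turán-type statement: a sparse exponential sum with rapidly growing modes is, at some $t\in[0,2\gamma]$, much larger than its value at $0$.

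To prove this inequality I would construct, for each $\lambda$, a dual functional $\varphi_\lambda\in L^1(0,2\gamma)$ with $\int\varphi_\lambda e^{2i\mu t}\,dt=\delta_{\lambda\mu}$ and good norm bounds. Equivalently, I need an entire function $G_\lambda$ in the Fourier image of $L^1(0,2\gamma)$, vanishing on $\Lambda\setminus\{\lambda\}$ and equal to $1$ at $\lambda$. The natural choice is
\begin{gather*}
    G_\lambda(z)=\frac{B(z)}{B'(\lambda)(z-\lambda)}\cdot\Phi_\lambda(z),\qquad B(z)=\prod\Big(1-\frac z\mu\Big),
\end{gather*}
where $B$ is the genus-zero product. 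Since $\sum1/|\mu|<\infty$, $B$ has zero exponential type and is bounded below polynomially on $\R$. The factor $\Phi_\lambda$ is a Paley--Wiener-type multiplier, e.g. $e^{i\gamma z}$ times a narrow bump in frequency, supplying the decay needed on $\R$. Then $g=-\sum_\lambda\varphi_\lambda$ is the candidate, and it suffices to show
\begin{gather*}
    \sum_{|\lambda|>R}\|\varphi_\lambda\|_{L^1}<1.
\end{gather*}
This estimate is the main obstacle. It requires a lower bound for $|B'(\lambda)|$ and control of $B$ on $\R$ that together gain a factor like $e^{-c|\Im\lambda|}$ in $\|\varphi_\lambda\|_1$. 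I expect to get this factor from the growth of $e^{2i\lambda t}$ via evaluating at $\lambda$ with $\Im\lambda\ll0$. I would attempt the lower bound by a Carleson-type separation argument, passing to a sparse subsequence, or splitting $\Lambda$ into finitely many well-separated pieces, since points in the angle can be thinned dyadically in $|\lambda|$. If direct bounds on $B'(\lambda)$ fail, my fallback is an iterative Newton-type scheme. Correct the values at points in dyadic shells $2^n\le|\lambda|<2^{n+1}$ one shell at a time, using finite-dimensional interpolation with functions supported near $t=2\gamma$. Each correction has $L^1$ norm $O(e^{-c2^n})$, so the series converges with total norm below $1$.
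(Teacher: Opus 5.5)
\textbf{There is a genuine gap: the key estimate $\sum_{|\lambda|>R}\|\varphi_\lambda\|_{L^1}<1$ is false in general.} You flagged this step as the main obstacle, but no choice of $B$ or $\Phi_\lambda$ can rescue it. The Blaschke condition in the angle imposes no separation. For example, $\Lambda$ may contain, for every $k$, the pair $\lambda_k=-i2^k$, $\lambda_k'=\lambda_k-ie^{-e^{2^k}}$. Write $\hat\varphi(k)=\int_0^{2\gamma}\varphi(t)e^{2ikt}\,dt$. Any biorthogonal $\varphi_{\lambda_k}\in L^1(0,2\gamma)$ has $\hat\varphi_{\lambda_k}(\lambda_k)=1$ and $\hat\varphi_{\lambda_k}(\lambda_k')=0$. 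On the segment $[\lambda_k,\lambda_k']$ we have $|\hat\varphi'|\le 4\gamma\|\varphi\|_{L^1}e^{4\gamma(|\Im\lambda_k|+1)}$, hence
\begin{gather*}
\|\varphi_{\lambda_k}\|_{L^1}\ge\frac{e^{-4\gamma(|\Im\lambda_k|+1)}}{4\gamma\,|\lambda_k-\lambda_k'|}=\frac{e^{-4\gamma}}{4\gamma}\exp\big(e^{2^k}-4\gamma 2^k\big)\to\infty .
\end{gather*}
So lower bounds on $|B'(\lambda)|$ do not exist under these hypotheses, and none of your remedies apply:
\begin{itemize}
\item Passing to a subsequence is not allowed, since every point of $\Lambda$ must become a zero.
\item Splitting into finitely many separated subfamilies does not help: biorthogonality is needed relative to all of $\Lambda$, and Blaschke allows clusters of unboundedly many points in unit disks.
\item The number of points per dyadic shell is unbounded.
\end{itemize}
The Newton-type fallback has the same ill-conditioning inside each shell. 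Its claimed $O(e^{-c2^n})$ decay is also unfounded. A correction $\delta$ supported near $t=2\gamma$ can have $|\hat\delta(\lambda)|$ as large as $\|\delta\|_{L^1}e^{4\gamma|\Im\lambda|}$ at later-shell points, so the residual data there need not shrink. Keeping earlier shells fixed forces each correction to vanish at all previously treated points. That again requires a function vanishing on $\Lambda$ with quantitative lower bounds, which you do not have.

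\textbf{The missing idea is to interpolate on clusters collectively, never dividing by $H'(\lambda)$ pointwise.} The paper takes one $H\in\PW_\gamma$ with $H(\Lambda)=0$ and applies Hayman's theorem: $\log|H(z)|=-\gamma|\Im z|+o(|z|)$ outside exceptional disks of finite view. In an angle $|\Im z|\asymp|z|$, so this yields $\log|H|\ge-2\gamma|\Im z|$ on the sides of a slightly wider angle $K_A$ and on horizontal cuts $\Im z=-h_k$ chosen to miss the exceptional disks. The Lagrange sum over the zeros in each slab $Q_n$ equals $\frac{1}{2\pi i}\oint_{\Gamma_n}\frac{G(w)\,dw}{H(w)(z-w)}$. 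This is estimated only on $\Gamma_n$, so it is insensitive to how tightly zeros cluster inside, including multiplicities. Your use of the angle, an exponential gain beating subexponential losses, is the same as the paper's.

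\textbf{Secondary issue: regularity of $g$.} Theorem~\ref{Thm jost functions} requires $z(f-1)\in\Const+\expPWsigma$, not merely $f-1=\hat g$ with $g\in L^1$. For instance, $g=\mathbf{1}_{[0,\gamma]}$ gives $k\hat g(k)=(e^{2i\gamma k}-1)/(2i)$, which is not of that form. In your normalization you need $g\in W^{1,2}(0,2\gamma)$ with $g(2\gamma)=0$, so $\|g\|_{L^1}$ is the wrong quantity to control. Moreover, your Hahn--Banach/weak-$*$ step only produces a measure, not an $L^1$ function. The paper avoids both problems: it interpolates directly in $\expPWsigma$ and removes the finitely many zeros in the closed upper half-plane with the $Q/P$ modification of Corollary~\ref{cor: interpolation}.
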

\begin{Thm}\label{thm: wider domain} 
Assume that $\Lambda$ is contained in the domain $\{z: -\Im z \geq C | z|^\beta\}$ for some $C > 0$ and $\beta > 0$. If the counting function $n$ of $\Lambda$ satisfies the relation $n(r) \ls r^\alpha$ for some $\alpha < \beta$ then there exists compactly supported $q\in L^1(\R_+)$ such that $\Lambda\subset \Res(q)$.
\end{Thm}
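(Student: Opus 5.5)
The plan is to construct the Jost function directly and then invoke Korotyaev's characterization (Theorem \ref{Thm jost functions}). That theorem says, roughly, that an entire function is the Jost function of a compactly supported potential precisely when it has the form $w(k)=1+\int_0^{2a}g(t)e^{ikt}\,dt$ with $g\in L^1$, satisfies the symmetry $w(-\bar k)=\overline{w(k)}$, and has no zeros in the closed upper half-plane except finitely many simple zeros on $i\R_+$.

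\emph{Reductions.} Replace $\Lambda$ by $\Lambda\cup(-\bar\Lambda)$; this keeps the hypotheses and makes the symmetry possible. If $\beta>1$, the domain $\{-\Im z\ge C|z|^\beta\}$ is bounded, so $\Lambda$ is finite and Theorem 1.2 of \cite{korotyaev2004inverse} applies. Hence assume $\beta\le1$, so $\alpha<1$.

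\emph{Step 1: a canonical product.} Let $G(k)=\prod_{\lambda\in\Lambda}(1-k/\lambda)$. Since $n(r)\ls r^\alpha$ with $\alpha<1$, $G$ has order at most $\alpha$ and genus zero. It satisfies $|G(k)|\le e^{|k|^{\alpha+\varepsilon}}$. By the minimum-modulus theorem for order $<1$, there is a set of small disks $D$ around $\Lambda$ such that $|G(k)|\ge e^{-|k|^{\alpha+\varepsilon}}$ outside these disks and their union.

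\emph{Step 2: the ansatz.} We look for
\[
w(k)=1+e^{2iak}h(k),
\]
with $h$ entire and $h=-e^{-2iak}$ on $\Lambda$, with multiplicities. The point is that on $\Omega=\{-\Im k\ge \tfrac C2|k|^\beta\}$ the target satisfies $|e^{-2iak}|\le e^{-aC|k|^\beta}$, which is super-small compared with $e^{\pm|k|^{\alpha+\varepsilon}}$ because $\alpha<\beta$.

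We realize $h$ by $\bar\partial$-correction. Take a smooth cutoff $\chi$ equal to $1$ on $D\cap\Omega'$ and supported in $\Omega$, where $\Omega'$ is a slightly smaller angle-type region containing $\Lambda$. Set
\[
h=-\chi e^{-2iak}-Gv,\qquad \bar\partial v=-\frac{e^{-2iak}\,\bar\partial\chi}{G}.
\]
The right-hand side is supported off $D$, where $1/|G|\le e^{|k|^{\alpha+\varepsilon}}$. There the numerator is at most $e^{-aC|k|^\beta}$ times a bounded gradient, assuming the cutoff varies on unit scale. So the source is $O(e^{-a c|k|^\beta})$. Hörmander's $L^2$ estimate with a weight like $\varphi(k)=-2\log(1+|k|^2)$, plus a subharmonic correction handling $\log|G|$, gives $v$ with polynomially weighted $L^2$ bounds. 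Then $h$ is entire and interpolates the data with multiplicity.

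\emph{Step 3: checking Korotyaev's conditions.} This is the main obstacle. One needs:
\begin{enumerate}
\item the Paley--Wiener form: $e^{2iak}h(k)$ is in $L^2(\R)$ and of exponential type $\le 2a$ in the right half-planes;
\item $|e^{2iak}h(k)|<1$ on $\overline{\C_+}$, so that $w$ has no zeros there;
\item the symmetry $w(-\bar k)=\overline{w(k)}$, obtained by symmetrizing $h$.
\end{enumerate}

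For $\Im k\ge0$ we have $|e^{2iak}|\le1$. The $\chi$-term vanishes there, and the $Gv$-term is bounded by $e^{|k|^{\alpha+\varepsilon}}$ times the tiny size of the data, but this bound is not uniform near $0$. I would try to fix this by freedom in $a$: enlarging $a$ shrinks all data by the factor $e^{-aC|k|^\beta}$ and shrinks the $\bar\partial$-solution correspondingly. To obtain a genuinely uniform bound $<1$ on the real line, including near $k=0$, I would additionally discard finitely many points of $\Lambda$ (those of modulus $\le R$) and put them back at the end via Theorem 1.2 of \cite{korotyaev2004inverse}. Exponential type in the lower half-plane requires that $v$ grow at most like $e^{o(|k|)}$, which follows from the weight choice since $\alpha+\varepsilon<1$.
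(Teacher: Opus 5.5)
There is a genuine gap, and it is structural rather than technical. As you set it up, $h=-\chi e^{-2iak}-Gv$ has zero exponential type, because $|\chi e^{-2iak}|\le 1$, $\log|G(k)|\le |k|^{\alpha+\eps}$, and you choose the weight so that $|v(k)|\le e^{o(|k|)}$. But every form of the Paley--Wiener requirement in your Step 3 forces $h$ to be bounded on $\R$. On the real line $|h(x)|=|w(x)-1|$; you ask for $h\in L^2(\R)$ and $|h|<1$, and the form in Theorem~\ref{Thm jost functions} even gives $|h(x)|=O(1/|x|)$. By Phragm\'en--Lindel\"of, an entire function of zero exponential type that is bounded on $\R$ is constant, and one in $L^2(\R)$ vanishes identically. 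Yet $h(\lambda)=-e^{-2ia\lambda}$ is not constant on the infinite set $\Lambda$. So no weight giving $e^{o(|k|)}$ growth can work: the $h$ you need is bounded in $\Cm_-$ and has positive type $2a$ in $\Cm_+$.

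The step that actually fails is the claim that the $Gv$-term is bounded by $e^{|k|^{\alpha+\eps}}$ times the tiny size of the data. On $\R$ you have $h=-Gv$. Here $G$, being non-constant of zero type, is unbounded on $\R$ by the same Phragm\'en--Lindel\"of argument, and nothing makes $v$ small there. A solution of $\bar\partial v=f$ at a real point is a global quantity: the Cauchy-transform solution behaves like $\frac{1}{\pi x}\int f\,dA$. So the smallness $e^{-ac|\zeta|^\beta}$ of the source deep in $\Cm_-$ does not transfer to $\R$. Enlarging $a$ or discarding finitely many points only reduces the size of $v$ and does nothing against the unboundedness of $G$ on $\R$. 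Letting the weight grow exponentially in $\Cm_+$ does not help by itself either, since you would still need $|v|\ls 1/|G|$ on $\R$ with control in $\Cm_-$, which is precisely a multiplier problem.

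The missing ingredient is an entire function of \emph{positive} exponential type that vanishes on $\Lambda$, is bounded on $\R$, and has a quantitative lower bound away from $\Lambda$. The paper takes the Luxemburg--Korevaar product $H=\prod(1-z^2/\lambda_n^2)\prod\cos(\eps_k z)\in\PW_\sigma$. Its cosine factors are exactly the multiplier compensating the growth of the canonical product on $\R$. The paper then proves Theorem~\ref{thm: H lower bound}: $\log|H(z)|\gs -|z|^\alpha\log|z|$ when $\dist(z,\pm\Lambda)\ge 1$ and $|\Im z|\ge 1$. This uses the explicit cosine factors (Lemma~\ref{Lem: cos estimate}), which a minimum-modulus theorem for $G$ alone cannot supply. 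Finally it interpolates $ze^{-i\sigma z}$ by the Lagrange series grouped over the clusters of Section~\ref{section: clustering}, bounding each group by a contour integral; $\alpha<\beta$ is used there just as you intended.

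Once you have this $H$, your $\bar\partial$ idea becomes a legitimate and somewhat cleaner alternative. After shifting so that $\Im\lambda\le -3$, take $\chi=1$ on $\{\dist(z,\Lambda)\le 1\}$, supported in $\{\dist(z,\Lambda)\le 2\}$, with bounded gradient. Let $v$ be the Cauchy transform of $ze^{-i\sigma z}\bar\partial\chi/H$; this source is bounded and $O(e^{-c|z|^\beta})$, so $v=O(1/(1+|z|))$. Then $g=\chi\, ze^{-i\sigma z}-Hv$ is entire, of type at most $\sigma$, and $O(1/|x|)$ on $\R$, hence in $\PW_\sigma$. This removes the cluster and contour-length bookkeeping. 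Your condition (2) is unnecessary anyway: finitely many zeros in $\ol{\Cm_+}$ are removed as in Corollary~\ref{cor: interpolation}.
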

In the next theorem we show that that the previous theorems are sharp in the following sense: the conclusion of Theorem \ref{thm: angle} no longer holds if we replace the angle with any wider domain; the relation from Theorem \ref{thm: wider domain}  between the counting function and the width of the allowed domain is essential.
\begin{Thm}\label{thm non res example} Let $\tau,\rho\colon \R_+\to\R_+$ be two positive unbounded increasing continuous  functions such that $\tau(r) = o(\rho(r))$ as $r \to \infty$. There exists $\Lambda\subset \{z: -\Im z \geq \tau(|z|)\}$ with $n(r)\le \rho(r)$ that 
is not a subset of $\Res(q)$ for any $q\in L^1(\R_+)$ with compact support.
\end{Thm}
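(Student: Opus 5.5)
Our plan is to get a contradiction from the growth restriction on the Jost function. Recall that for compactly supported $q$ with $\operatorname{supp} q\subset[0,\ell]$, the Jost function $w$ is an entire function of exponential type with
\[
|w(k)-1|\lesssim \frac{e^{2\ell\,|\Im k|}}{1+|k|},\qquad \Im k<0,
\]
and $w$ is bounded in the closed upper half-plane. So $\log|w(-iy)|\le 2\ell y+O(1)$ on the negative imaginary axis. We will place $\Lambda$ on (or very near) the ray $-i\R_+$ and make it so dense that no function of exponential type with $w(0)\neq0$-type normalization can vanish on it. The key inputs are Jensen's formula and the Carleman/Nevanlinna formula for the lower half-plane. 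Potentials that give $w(0)=0$ or multiple zeros can be handled by dividing out finitely many factors.

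\textbf{Construction.} Since $\tau=o(\rho)$, we can choose radii $r_j\to\infty$, growing fast, with $\rho(r_j)/\tau(r_j)\to\infty$. In each annulus $r_j\le|z|\le 2r_j$ we put on the segment $-i[r_j,2r_j]$ a block of $m_j$ points with $m_j\approx \rho(r_j)/2$. This gives $n(r)\le\rho(r)$, using monotonicity of $\rho$ and the fast growth of $r_j$ so that earlier blocks are negligible. The constraint $-\Im z=|z|\ge\tau(|z|)$ is automatic once $\tau(r)\le r$. In general $\tau$ may exceed $r$ somewhere. In that case the points must satisfy $-\Im z\ge\tau(|z|)$, but then $|z|\le -\Im z$ fails, so we may assume $\tau(r)\le r$ after noting that the allowed domain is then empty. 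We can even arrange $\Lambda\subset -i\R_+$.

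The Blaschke-type sum over the lower half-plane for the zeros of $w$ is controlled by Carleman's formula:
\[
\sum_{\lambda\in\Res(q),\,|\lambda|<R}\Big(\frac1{|\lambda|}-\frac{|\lambda|}{R^2}\Big)\sin|\arg\lambda| \le 2\ell\cdot\frac{\pi}{2}\cdot\text{const}+O(\log R)\Big/ \text{(bounded terms)}.
\]
More precisely, applying the formula to $w(\bar{\cdot})$ in the upper half-plane, or to the Cartwright-class function $w$, gives
\[
\sum_{|\lambda|<R}\frac{|\Im\lambda|}{|\lambda|^2}\le A\log R+B.
\]
Here we use that $w\in$ Cartwright class, as $\int\log^+|w(x)|/(1+x^2)\,dx<\infty$ since $w$ is bounded on $\R$. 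Our block on $-i[r_j,2r_j]$ contributes at least $m_j/(2r_j)$, which is only useful if $\rho$ is large relative to $r$. This shows Carleman alone does not suffice for slowly growing $\rho$.

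\textbf{Main obstacle.} We must exploit clustering rather than the Blaschke sum. The hard step is a sharper local estimate. Take $j$ large and $z_j=-i\cdot\tfrac32 r_j$, and consider $f=w$ on the disc $D_j=D(z_j,r_j/2)$, of radius comparable to $\tau$-scale. Concretely, we instead place the block inside a disc $D(z_j,\tau(r_j))$ centred at $z_j=-2i\tau(r_j)\cdot$(large) so that $-\Im z\ge\tau(|z|)$ still holds, and then use Jensen in $D(z_j,2\tau(r_j))$. Since $|w|\le Ce^{2\ell|\Im k|}$ and $|w|\ge$ something at the centre, the number of zeros in the disc is $\lesssim \ell\tau(r_j)+\log(1/|w(z_j)|)$. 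We need a lower bound for $|w(z_j)|$. Using $w(k)=1+O(e^{2\ell|\Im k|}/|k|)$ is useless deep in the lower half-plane. So we instead centre Jensen's disc at a point $\zeta_j$ with $\Im\zeta_j\ge0$, where $|w-1|\lesssim1/|\zeta_j|$ gives $|w(\zeta_j)|\ge1/2$. We use a disc $D(\zeta_j,R_j)$ with $R_j\approx 3\,|z_j|$ covering the block, and we place the block at height $\approx\tau(r_j)$ with $|\Re z|\le$ small, i.e. $z\in -i[\tau(r_j),2\tau(r_j)]$ while $|z|\approx r_j$ requires $\Re z\approx r_j$.

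With this, Jensen gives the number of zeros in $D(\zeta_j,R_j/2)$ as $\lesssim \ell\, R_j$, which is too weak unless the disc radius is of order $\tau(r_j)$. So we take $\zeta_j=r_j$ real, $R_j=4\tau(r_j)$, with $|w(r_j)|\ge1/2$ for large $r_j$. The block consists of $m_j\approx\rho(r_j)/2$ points in $\{r_j-i s: \tau(r_j)\le s\le 2\tau(r_j)\}$, which are admissible since $|z|\approx r_j$. Jensen then gives
\[
m_j\lesssim \log\max_{D(r_j,R_j)}|w|+\log 2\lesssim \ell\tau(r_j),
\]
and $\rho(r_j)/\tau(r_j)\to\infty$ forces a contradiction for every fixed $\ell$. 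Multiplicity and $n(r)\le\rho(r)$ are kept by spacing the points distinctly and choosing $r_j$ sparse.
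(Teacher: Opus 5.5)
Your final argument (the last two paragraphs of the proposal) is correct in substance, but it reaches the contradiction by a different route from the paper. The paper uses Lemma~\ref{Lem: phase function}, the uniform bound $\sup_{t\in\R}\sum_{s\in\Res(q)}|\Im s|/|t-s|^2<\infty$. That lemma comes from de Branges-space theory: the asymptotics of $A$ and $B$ from \cite{baranov2017branges}, extended to $L^1$ potentials via \cite{Chelkak2010}. The paper takes $\Lambda$ to consist of the points $z_{n_k}$ with $|z_{n_k}|=r_{n_k}$ and $\Im z_{n_k}=-\tau(r_{n_k})$, each repeated $n_k=\kappa(r_{n_k})$ times. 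The contradiction is then the one-line evaluation $p(\Re z_{n_k})\ge n_k/\tau(r_{n_k})\to\infty$.

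You instead apply Jensen's formula in $D(r_j,4\tau(r_j))$. This needs only $|w(k)|\ls e^{2\ell|\Im k|}$ and $w(x)\to 1$ along $\R$, both immediate from the Volterra equation for the Jost solution. It gives the local count $\#\bigl(\Res(q)\cap D(t,R)\bigr)\ls \ell R+1$ for large real $t$, which is all the construction needs, and it makes the proof self-contained. The paper's lemma is a stronger, global statement; it does not follow from the local count by summing over dyadic scales. Only its local consequence is actually used.

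One step needs repair: your points are not always in the allowed domain. The points $r_j-is$ with $\tau(r_j)\le s\le 2\tau(r_j)$ satisfy $|r_j-is|>r_j$, and $\tau$ is increasing. So already for $s=\tau(r_j)$ and strictly increasing $\tau$ the condition $-\Im z\ge\tau(|z|)$ fails, and $\tau$ may also jump just after $r_j$. Put the block on the arc $\{|z|=r_j,\ -\Im z\ge\tau(r_j)\}$ instead, e.g.\ at the paper's point $\sqrt{r_j^2-\tau(r_j)^2}-i\tau(r_j)$ with multiplicity $m_j$. That point lies within $\sqrt2\,\tau(r_j)$ of $r_j$, so your Jensen disc still captures it.

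Your reduction to $\tau(r)\le r$ is also phrased backwards. If the admissible region is bounded, every admissible $\Lambda$ is finite and hence realizable, so the statement itself needs $\tau(r)\le r$ for arbitrarily large $r$. The paper's reduction to $\tau=o(r)$ glosses over the same point.

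Finally, delete the Carleman discussion (its displayed inequality is garbled) and the abandoned disc placements, since they play no role in the argument.
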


Let us finish the introduction with some concluding remarks.
\begin{itemize}
    \item Theorem \ref{thm: wider domain} can be formulated for the domains $\{z: -\Im z \geq \tau(|z|)\}$ when the function $\tau$ satisfies $\tau(r)\gs \log^5 r$. In this case there exists an unbounded $\rho$ such that the assertion $n(t)\le \rho(t)$ implies the existence of the required potential $q$. 
    \item Assertions on the given set $\Lambda$ from Theorem \ref{thm: angle} imply $n(r) = o(r)$ as $r\to\infty$, which is consistent with \eqref{eq: apart from a set of density zero}; in Theorem \ref{thm: wider domain} we also have $n(r)\ls r^\alpha = o(r)$ as $r\to\infty$. On the other hand, we have $N(r) = O(r)$ as $r\to\infty$ for the counting function of $\Res(q)$ hence in both theorems $\Lambda\neq \Res(q)$ and, even more, $\Lambda$ is very small part of $\Res(q)$.
\end{itemize}

\section{Characterization of Jost functions}
Jost functions corresponding to the \Schr operators with compactly supported potentials are entire functions of exponential type. 
Let $\PW_\sigma$ be the classical Paley-Wiener space of entire functions with spectrum in $[-\sigma, \sigma]$ and $\expPWsigma = e^{iz\sigma}\PW_\sigma$. We also fix the notation
\begin{gather*}
    \Cm_+ = \{z\colon \Im z > 0\}, \qquad \ol{\Cm_+} = \{z\colon \Im z \ge 0\},\qquad \Cm_- = \{z\colon \Im z < 0\}.
\end{gather*}
In the present section we use the characterization of Jost functions from the paper \cite{korotyaev2004inverse} to reduce question \eqref{eq: question} to the construction of the suitable entire function.

\begin{Thm}[E. Korotyaev, Theorem 1.1, \cite{korotyaev2004inverse}]\label{Thm jost functions}
    Let $f$ be an entire function without zeroes in the closed half-plane $\ol{\Cm_+}$ such that
    \begin{gather}\label{eq: jost function assertion}
        z (f(z) - 1) \in \Const + \expPWsigma.
    \end{gather}
    Then $f$ is a Jost function of some \Schr operator with $q\in L^1([0,\sigma])$.
\end{Thm}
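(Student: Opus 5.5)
Since Theorem \ref{Thm jost functions} is quoted from \cite{korotyaev2004inverse}, we would not reprove it in full; the following is how we would reconstruct it by inverse scattering on the half-line. Throughout we use the reality symmetry $f(-k)=\ol{f(\ol k)}$, which every Jost function of a real potential has. We take it to be part of the class in \eqref{eq: jost function assertion} as intended by Korotyaev (a real-valued function in $\PW$ after the appropriate normalisation); it is needed for the scattering data below to be unitary.

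\textbf{Step 1: Fourier representation of $f$.} Write $z(f(z)-1)=c+e^{iz\sigma}g(z)$ with $g\in\PW_\sigma$. Then $e^{iz\sigma}g(z)=\int_0^{2\sigma}h(t)e^{izt}\,dt$ with $h\in L^2(0,2\sigma)$. The function $f$ is entire, so the right-hand side must vanish at $z=0$, which forces $c=-\int_0^{2\sigma}h$. Dividing by $z$ then gives
\begin{gather*}
f(k)=1+\int_0^{2\sigma}a(t)e^{ikt}\,dt,\qquad a(t)=-i\int_t^{2\sigma}h(s)\,ds,
\end{gather*}
with $a$ absolutely continuous on $[0,2\sigma]$. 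This is exactly the form $\psi(k,0)=1+\int_0^{2\sigma}K(0,t)e^{ikt}\,dt$ that the transformation operator produces for a potential supported in $[0,\sigma]$. It also shows that $f\to1$ in $\ol{\Cm_+}$ and that $|f(k)|\ge c_0>0$ on $\R$, because $f$ has no real zeros.

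\textbf{Step 2: scattering data and recovery of $q$.} Set $S(k)=f(-k)/f(k)$ for $k\in\R$. By the symmetry $|S|=1$, and $S-1\in L^1\cap L^2(\R)$ by Step 1. Since $f$ has no zeros in $\Cm_+$, there are no bound states and no normalising constants. Define the Marchenko kernel
\begin{gather*}
F(t)=\frac1{2\pi}\int_\R (1-S(k))e^{ikt}\,dk
\end{gather*}
(up to the usual half-line sign convention). We solve the Marchenko equation $K(x,t)+F(x+t)+\int_x^\infty K(x,s)F(s+t)\,ds=0$ for $t>x\ge 0$. Unique solvability for each $x$ follows from $|S|=1$ together with the absence of zeros of $f$ in $\ol{\Cm_+}$: the operator $I+F_x$ is then positive, by the standard Faddeev--Marchenko argument, and $F_x$ is compact. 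Set $q(x)=-2\frac{d}{dx}K(x,x)$. The regularity of $a$ from Step 1 gives $q\in L^1$, and the standard theory then shows that the Jost function of $q$ has scattering matrix $S$ and no zeros in $\Cm_+$. A function without zeros in $\Cm_+$ that tends to $1$ is determined by $S$, via the outer-function factorisation $\log f$ on $\R$, so the Jost function of $q$ equals $f$.

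\textbf{Step 3: support of $q$ (main obstacle).} The hard part is showing $q=0$ on $(\sigma,\infty)$; $F$ itself is not compactly supported, so this does not follow directly. Our first attempt is a direct construction on the interval. Using $f$ and $f(-\cdot)$ we define a candidate solution on $[\sigma,\infty)$ by $\psi(k,x)=e^{ikx}$. We then propagate the Cauchy data at $x=\sigma$ backwards via the Gelfand--Levitan equation on $[0,\sigma]$, whose kernel is built from the spectral measure $d\rho(\lambda)=\frac{\sqrt\lambda}{\pi|f(\sqrt\lambda)|^2}d\lambda$. The Paley--Wiener structure from Step 1 should force the Gelfand--Levitan kernel to vanish outside the triangle $\{0\le t\le x\le\sigma\}$. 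Equivalently, the Marchenko kernel $K(x,\cdot)$ should vanish for $x\ge\sigma$; the intended proof is a Paley--Wiener/de Branges argument applied to $f(k)\psi(-k,x)-f(-k)\psi(k,x)$. Verifying this vanishing rigorously is the step we expect to take the most work.
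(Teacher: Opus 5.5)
The paper gives no proof of this statement. It is quoted from Korotyaev and used as a black box, so your proposal has to stand on its own.

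Your Marchenko route is workable, but as written it leaves unproved exactly the non-standard part, $\operatorname{supp} q\subset[0,\sigma]$. Moreover, the obstruction you name in Step 3 does not exist. Only the restriction of $F$ to $(0,\infty)$ enters the Marchenko equation, and with no bound states that restriction vanishes on $(2\sigma,\infty)$. To see this, note that $f\neq 0$ on $\ol{\Cm_+}$ and $f\to 1$ there, so the half-line Wiener--L\'evy theorem gives $1/f(k)=1+\int_0^\infty b(s)e^{iks}\,ds$ with $b\in L^1(\R_+)$. Insert this and your Step 1 representation into $1-S(k)=\bigl(f(k)-f(-k)\bigr)/f(k)$ and invert the Fourier transform. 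For $u>0$ this yields
\begin{gather*}
F(u)=-a(u)-\int_0^\infty a(u+s)\,b(s)\,ds,
\end{gather*}
where $a$ is extended by zero outside $[0,2\sigma]$. For $x\ge\sigma$ and $y>x$, all arguments $x+y$ and $t+y$ (with $t\ge x$) exceed $2\sigma$. The Marchenko equation therefore collapses to $K(x,y)=0$, and $q(x)=-2\frac{d}{dx}K(x,x)=0$ for $x>\sigma$. No Gelfand--Levitan problem on $[0,\sigma]$ or de Branges argument is needed. The sketch you give for that step mixes the Marchenko and Gelfand--Levitan kernels and is not an argument as it stands.

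The same formula is also what Step 2 really needs. The phrase ``the regularity of $a$ gives $q\in L^1$'' stands in for Marchenko's characterization theorem, and you must check its hypotheses:
\begin{itemize}
\item $\int_0^\infty(1+x)|F'(x)|\,dx<\infty$ holds because $F'(u)=-a'(u)-\int_0^\infty a'(u+s)b(s)\,ds\in L^1(0,2\sigma)$ (recall $a(2\sigma)=0$) and $F=0$ beyond $2\sigma$.
\item The index condition reduces to zero winding of $S=f(-\cdot)/f$ along $\R$. This follows from the argument principle, since $f$ has no zeros in $\ol{\Cm_+}$ and tends to $1$.
\end{itemize}
Incidentally, $S-1$ lies in $L^2(\R)$ but in general not in $L^1(\R)$, since $|S(k)-1|\sim 2|a(0)|/|k|$ as $k\to\pm\infty$.

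Finally, your remark on symmetry is correct, and the condition $f(-\ol{k})=\ol{f(k)}$ should be stated as an explicit hypothesis. It is missing from the statement as quoted, and without it the conclusion fails for real $q$. For example, $f(k)=1+i\eps\int_0^1(1-t)e^{ikt}\,dt$ with $0<\eps<2$ satisfies all the stated hypotheses for $\sigma\ge 1/2$, but is not symmetric and so cannot be the Jost function of a real potential.
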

\begin{Cor}\label{cor: interpolation}
Consider the set $\Lambda \subset \Cm_-$. If there exists $r \in \expPWsigma$ such that $r(0) = 0$ and $r(\lambda) = \lambda$ for all $\lambda\in\Lambda$ then $\Lambda\subset \Res(q)$ for some potential $q\in L^1(\R_+)$ with $\supp q \subset [0,\sigma]$.
\end{Cor}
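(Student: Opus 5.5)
The natural candidate is $f(z) = 1 - r(z)/z$. Since $r(0)=0$, $f$ is entire, and $z(f(z)-1) = -r(z) \in \expPWsigma$. Moreover $f(\lambda) = 1 - \lambda/\lambda = 0$ for every $\lambda\in\Lambda$. The only hypothesis of Theorem~\ref{Thm jost functions} that can fail is that $f$ has no zeros in $\ol{\Cm_+}$. So the plan has two steps: show that $f$ has only finitely many zeros there, and then move them into $\Cm_-$ without disturbing the rest of the structure.

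\textbf{Finiteness of the bad zeros.} Write $r(z) = e^{i\sigma z}g(z)$ with $g\in\PW_\sigma$. By the Paley--Wiener theorem, $r$ is the Fourier transform of an $L^2$ function supported on $[0,2\sigma]$. Cauchy--Schwarz then gives $|r(z)| \le C$ on $\ol{\Cm_+}$. Hence $|r(z)/z| < 1/2$ for $|z| > 2C$, $z\in\ol{\Cm_+}$. So the zeros $z_1,\dots,z_m$ of $f$ in $\ol{\Cm_+}$, listed with multiplicity, lie in a compact set and are finitely many. None of them is in $\Lambda$.

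\textbf{Moving them.} Choose points $w_1,\dots,w_m\in\Cm_-\setminus\Lambda$, for instance $w_j = \Re z_j - i$ suitably perturbed. Put
\begin{gather*}
B(z) = \prod_{j=1}^m \frac{z-w_j}{z-z_j}, \qquad \tilde f = fB .
\end{gather*}
Then $\tilde f$ has the following properties.
\begin{itemize}
\item It is entire, because the poles of $B$ are cancelled by the zeros of $f$, counted with multiplicity.
\item It has no zeros in $\ol{\Cm_+}$.
\item It still vanishes on $\Lambda$, because $B(\lambda)$ is finite for $\lambda\in\Lambda$. Note that $B(\lambda)\neq 0$ is not needed.
\end{itemize}

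\textbf{Checking \eqref{eq: jost function assertion} for $\tilde f$.} This is the step I expect to need the most care. Set $E(z) = z(\tilde f(z)-1) = -r(z)B(z) + z(B(z)-1)$, which is entire. Since $B$ is rational with $B(\infty)=1$, we have $z(B(z)-1) = c + O(1/z)$ for some constant $c$. Therefore $E - c = -rB + O(1/|z|)$ away from the compact set containing the $z_j$.

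I would then verify three estimates for $E-c$.
\begin{itemize}
\item It lies in $L^2(\R)$, because $r|_\R\in L^2$ and $B$ is bounded on $\R$ away from the finitely many real $z_j$. Near those points $E$ is entire, hence locally bounded.
\item It is bounded in $\ol{\Cm_+}$ outside a compact set.
\item It satisfies $|E(z)-c| \ls e^{2\sigma|\Im z|}(1+|z|)$ in $\Cm_-$, since $|r(z)| \ls e^{2\sigma|\Im z|}$ there.
\end{itemize}
By Phragmén--Lindelöf and the Paley--Wiener theorem, an entire function of exponential type that is square-integrable on $\R$ and has indicator $0$ in the upper direction and at most $2\sigma$ in the lower direction has spectrum in $[0,2\sigma]$. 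This gives $E - c\in\expPWsigma$, so $z(\tilde f - 1)\in \Const + \expPWsigma$.

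Theorem~\ref{Thm jost functions} now says $\tilde f$ is the Jost function of some $q\in L^1([0,\sigma])$. Its zeros in $\Cm_-$ include $\Lambda$, so $\Lambda\subset\Res(q)$. The main obstacle is the last verification: the exponential-type and spectrum bookkeeping for $E$, in particular the control near the real zeros $z_j$, which is handled by the fact that $E$ is entire.
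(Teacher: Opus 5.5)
Your proposal is correct and is essentially the paper's own argument. You use the same function $f = 1 - r/z$, get finitely many zeros in the closed upper half-plane from the boundedness of $r$ there, and move them into $\Cm_-$ with the same rational factor $B = Q/P$ and the same constant $c$. The only cosmetic difference is that you verify $z(\tilde f - 1) - c \in \expPWsigma$ in one step from growth estimates and Paley--Wiener. The paper instead splits $z(f_1-1) = z(f-1) + cf + f\cdot\frac{z(Q-P)-cP}{P}$ and applies Paley--Wiener to each term separately.
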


\begin{proof}
Let $f$ be an entire function defined by $z(f - 1) = -r$. If $z\neq 0$ the equality $f(z) = 0$ is equivalent to $r(z) = z$ hence for every $\lambda\in\Lambda$ we have $f(\lambda) = 0$. Furthermore, $r\in \expPWsigma$ is bounded in $\ol{\Cm_+}$ hence $f$ has only a finite number of zeros in $\ol{\Cm_+}$, denote them by $z_1, \ldots, z_N$.
Fix $N$ arbitrary points $w_1, \ldots, w_N \in \Cm_-$ and define polynomials
\begin{gather*}
    P(z) = (z - z_1)\cdot\ldots\cdot(z - z_n),\qquad  Q(z) = (z - w_1)\cdot\ldots\cdot(z - w_n).
\end{gather*}
The function  $f_1 = f\cdot\frac{Q}{P}$
is entire, has no zeroes in $\ol{\Cm_+}$ and it satisfies $f_1(\lambda) = f(\lambda) = 0$ for every $\lambda\in\Lambda$.
We claim that $f_1$ is a Jost functions corresponding to some potential supported on $[0, \sigma]$. According to Theorem \ref{Thm jost functions}, it suffices to verify
\begin{gather}
    \label{eq: Jost assertion in cor}
    z(f_1 - 1)\in \Const + \expPWsigma.
\end{gather}
Both $P$ and $Q$ are monic with $\deg P = \deg Q = N$ hence there exists $c\in \Cm$ such that 
\begin{gather}
\label{eq: choice of c}
    \deg(z(Q - P) - cP) <  N.
\end{gather}
The equality  $f_1 = f\cdot\frac{Q}{P} = f + f\cdot\frac{Q - P}{P}$ implies
\begin{gather}
\label{eq: z(f_1 - 1) expansion}
    z(f_1 - 1)= z(f - 1) + f\cdot\frac{z(Q - P)}{P} = z(f - 1) + cf + f\cdot\frac{z(Q - P)- cP}{P}.
\end{gather}
Recall that we have $z(f - 1) = -r \in \expPWsigma$. 
The Paley-Wiener theorem gives $r/z\in \expPWsigma$ hence
\begin{gather*}
    f = -\frac{r}{z} + 1 \in \Const + \expPWsigma.
\end{gather*}
By the choice \eqref{eq: choice of c} of the constant $c$, we have
\begin{gather*}
    \left|\frac{z(Q - P) - cP}{P}\right|= O\left(|z|^{-1}\right), \qquad |z|\to\infty.
\end{gather*}
Once again the Paley-Wiener theorem implies that the third term in the right-hand side of \eqref{eq: z(f_1 - 1) expansion} belongs to $\expPWsigma$. Inclusion \eqref{eq: Jost assertion in cor} follows, the proof is concluded.
\end{proof}

\section{Ideas of the proofs}\label{section: idea}

Theorems \ref{thm: angle} and \ref{thm: wider domain} address  question \eqref{eq: question} of whether a given set $\Lambda$ is contained in the resonances set $\Res(q)$ for some compactly supported $q$. 
According to Corollary \ref{cor: interpolation}, positive answer to the latter follows from the existence of
$r \in \expPWsigma$ that satisfies $r(0) = 0$ and $r(\lambda) = \lambda$ for all $\lambda\in\Lambda$. Equivalently, the existence of $g \in \PW_\sigma$ such that 
\begin{gather}
\label{eq: interpolation problem idea}
    g(0) = 0, \qquad g(\lambda) = \lambda e^{-i\sigma \lambda}, \qquad \lambda\in \Lambda.
\end{gather}
Denote $G(z) = z e^{-i\sigma z}$ and let $H$ be entire with $H(\lambda)=0$ for every $\lambda\in\Lambda$.
We want to construct $g$ from \eqref{eq: interpolation problem idea} by the following interpolation formula:
\begin{gather}
\label{eq: first series representation}
g(z) = H(z)\sum_{\lambda\in\Lambda} \frac{G(\lambda)}{(z-\lambda) H'(\lambda)}, \qquad z \in \mathbb{C}.
\end{gather}
When $z = \lambda \in \Lambda$ there is only one non-zero term in the latter sum that gives $g(\lambda) = G(\lambda)$ as required in $\eqref{eq: interpolation problem idea}$. The problem with this approach is that the series may not converge for other $z\in \Cm$. To overcome this obstacle we group elements of $\Lambda$ that are close to each other into disjoint sets $\{\Lambda_n\}_{n\ge 1}$ and try to obtain the convergence in \eqref{eq: first series representation} for the following order of summation:  
\begin{gather}
\label{eq: second series representation}
    g(z) = H(z)\sum_{n\ge 1}\sum_{\lambda\in\Lambda_n} \frac{G(\lambda)}{(z-\lambda) H'(\lambda)}, \qquad z \in \mathbb{C}.
\end{gather}
We draw contours $\Gamma_n$ around each $\Lambda_n$, see Figure \ref{fig:general contours}, and rewrite inner sums in \eqref{eq: second series representation} by the residue theorem:
\begin{gather}
    \label{eq: from sum to contour}
    \sum_{\lambda\in \Lambda_n}\frac{G(\lambda)}{H'(\lambda)(z - \lambda)} = \frac{1}{2\pi i}\oint_{\Gamma_n}\frac{G(w)\, dw}{H(w)(z - w)}.
\end{gather}
The value $|G(w)| = |w|\exp(-\sigma|\Im w|)$ is small when $w\in\Cm_-$ hence the proper estimate on $|H(w)|$  from below 
(e.g., $|H(w)|\gs e^{-(\sigma - \eps)|\Im w|}$) is sufficient to get convergence of the series in \eqref{eq: second series representation}.

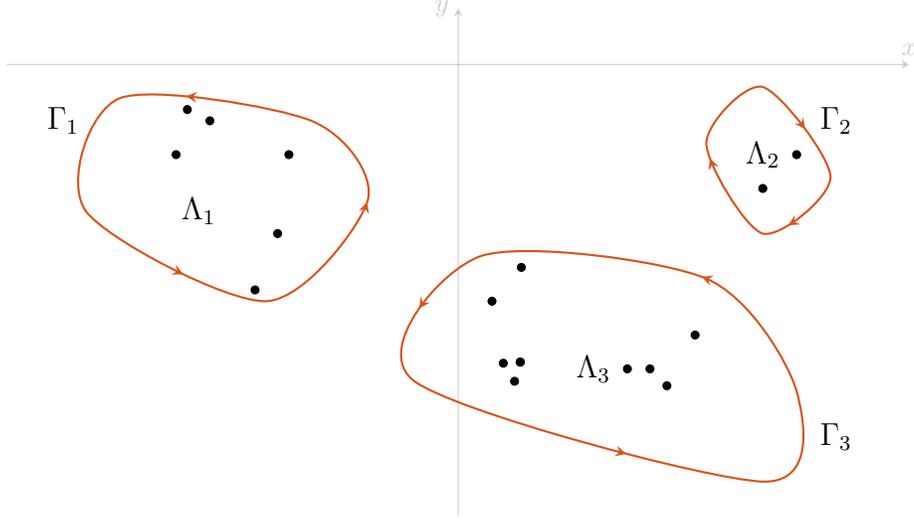
\begin{figure}
\centering
\begin{tikzpicture}[scale =1.5, >=stealth]
\def\pointsize{1pt}

  \draw[opacity = 0.2, thin, ->] (0,3) -- (8,3) node[above] {$x$};
    \draw[opacity = 0.2, thin, ->] (4,-1) -- (4,3.5) node[left] {$y$};

  \tikzset{contour arrows/.style={
    decoration={
      markings,
      mark=at position 0.15 with {\arrow{stealth}},
      mark=at position 0.45 with {\arrow{stealth}},
      mark=at position 0.75 with {\arrow{stealth}}
    },
    postaction={decorate}
  }}

  \draw[myred,thick,contour arrows] plot [smooth cycle] coordinates
    {(1,2.7) (0.7,1.7) (2.3,0.9) (3.2,1.8) (2.7,2.5)};
  \node at (1.7,1.7) {\Large $\Lambda_1$};
  \node at (0.5,2.5) {\Large $\Gamma_1$};

  \draw[myred,thick,contour arrows] plot [smooth cycle] coordinates
    {(6.2,2.3) (6.7,2.8) (7.3,2.0) (6.7,1.5)};
  \node at (6.7,2.2) {\Large $\Lambda_2$};
  \node at (7.35,2.5) {\Large $\Gamma_2$};

  \draw[myred,thick,contour arrows] plot [smooth cycle] coordinates
    {(6.7,-0.7) (7.0,0.1) (6.2,1.1) (4.2,1.3) (3.6,0.2)};
  \node at (5.2,0.3) {\Large $\Lambda_3$};
  \node at (7.35,-0.3) {\Large $\Gamma_3$};

  \filldraw[black] (1.5,2.2) circle (\pointsize);
  \filldraw[black] (1.8,2.5) circle (\pointsize);
  \filldraw[black] (1.6,2.6) circle (\pointsize);
  \filldraw[black] (2.5,2.2) circle (\pointsize);
  \filldraw[black] (2.4,1.5) circle (\pointsize);
  \filldraw[black] (2.2,1.0) circle (\pointsize);

  \filldraw[black] (7.0,2.2) circle (\pointsize);
  \filldraw[black] (6.7,1.9) circle (\pointsize);

  \filldraw[black] (5.7,0.3) circle (\pointsize);
  \filldraw[black] (5.85,0.15) circle (\pointsize);
  \filldraw[black] (6.1,0.6) circle (\pointsize);
  \filldraw[black] (5.5,0.3) circle (\pointsize);
  \filldraw[black] (4.5,0.19) circle (\pointsize);
  \filldraw[black] (4.4,0.35) circle (\pointsize);
  \filldraw[black] (4.55,0.36) circle (\pointsize);
  \filldraw[black] (4.56,1.2   ) circle (\pointsize);
  \filldraw[black] (4.3,0.9) circle (\pointsize);
  \filldraw[black] (4.55,0.36) circle (\pointsize);

\end{tikzpicture}
        \caption{ The set $\Lambda$ is partitioned into the disjoint union $\Lambda = \cup_{n\ge 1}\Lambda_n$ and the contour $\Gamma_n$ is drawn around each of these sets.}
    \label{fig:general contours}
\end{figure}

\section{Points in the angle. Proof of Theorem \ref{thm: angle}}
Recall that the set $\Lambda = \{\lambda_n\}_{n\ge 0}$ satisfies \Blaschke condition in the half-plane and there exists $C > 0$ such that $\Lambda\subset K_C$, where 
\begin{gather*}
    K_C = \{z: -\Im z \geq C |\Re z|\}
\end{gather*}
is the angle in the lower half-plane $\Cm_-$.
Fix arbitrary $\sigma > 0$ and denote $\gamma  = \sigma /3$. There exists $H \in \PW_{\gamma}$ such that $H(0)\neq 0$ and $H(\lambda_n) = 0$ for every $n\ge 0$, see Theorem 2.1.5 in \cite{khabibullin2006completeness}.
Without loss of generality, we may also assume that $\big|[-\gamma, -\gamma + \eps]\cap \supp \hat H\big| > 0$ for all $\eps > 0$.
The Hayman theorem, see Lecture 15 in \cite{levin1996lectures}, states that 
\begin{gather}
\label{eq: H(z) asymptotics}
    \log|H(z)| = -\gamma |\Im z| + o(|z|), \qquad |z| \to \infty,
\end{gather}
holds everywhere in $\Cm_-$ outside of the union of disks of finite view.  In other words, there exists a family of circles 
\begin{gather*}
    B_n = \{z\colon |z - z_n|\le r_n\},\qquad n\ge 1,
\end{gather*}
such that $\sum_{n\ge 1} r_j/|z_j| < \infty$ and \eqref{eq: H(z) asymptotics} holds in the domain $\mathcal{U} = \Cm_-\setminus \cup_{n\ge 1}B_n$.
\bigskip
In particular, for some $A > C$ we have 
\begin{gather*}
    \{z\colon -\Im z = A |\Re z|\}\subset \mathcal{U}.
\end{gather*}
Let $K_A = \{z: -\Im z \geq A |\Re z|\}$ be the corresponding angle. Relation \eqref{eq: H(z) asymptotics} implies the inequality
\begin{gather}
    \label{eq: log H 2 gamma}
    \log|H(z)| \ge -2\gamma|\Im z|
\end{gather}
when $z\in \mathcal{U}\cap K_A$ and $|\Im z|$ is large enough (say $\Im z < -h < 0$).
Furthermore, there exists an unbounded sequence $h \le h_0  < h_1 < h_2 < \ldots$  such that the segments 
\begin{gather*}
    K_A\cup\{z\colon \Im z = - h_k\}, \qquad k\ge 0,
\end{gather*}
are contained in $\mathcal{U}$, see Figure \ref{fig:horizontal lines}. We may also assume that $h _k + 2\le h_{k + 1}\le 2 h_k$ holds for all $k\ge 0$. 
Consider the triangle $Q_0 = K_A\cap \{z\colon \Im z \ge -h\}$, the quadrilaterals
\begin{gather*}
    Q_n = K_A\cup \{z\colon -h_{n}\le \Im z \le -h_{n - 1}\}, \qquad n\ge 1, 
\end{gather*}
and let $\Gamma_n$ be the contour around $Q_n$ for each $n\ge 0$, see Figure \ref{fig:quadrilaterals}. 
\medskip

Let $\Lambda_n$ be the set of zeroes of $H$ inside of $Q_n$. Notice that $\Lambda\subset K_C\subset K_A = \cup_{n\ge 0}Q_n$ hence $\Lambda\subset\cup_{n\ge 0}\Lambda_n$. For every $n\ge 0$, consider the finite sum 
\begin{gather*}
    F_n(z) = \sum_{\lambda\in \Lambda_n}\frac{G(\lambda)}{H'(\lambda)(z - \lambda)}.
\end{gather*}
When $z\notin Q_n$ the latter rewrites, recall \eqref{eq: from sum to contour}, in the form
\begin{gather}
    \label{eq: fn as contour rewritten}
    F_n(z) = \frac{1}{2\pi i}\oint_{\Gamma_n}\frac{G(w)\, dw}{H(w)(z - w)}.
\end{gather}
By the construction, inequality \eqref{eq: log H 2 gamma} holds on $\Gamma_n$ for $n\ge 1$ hence
\begin{gather*}
    |H(w)|^{-1} \le \exp(2\gamma|\Im w|), \qquad \frac{|G(w)|}{|H(w)|}\le |w| \exp((-\sigma+ 2\gamma)|\Im w|),\qquad w\in \Gamma_n.
\end{gather*}
For every $w\in \Gamma_n$ we have $h_n\le |\Im w|$ and $|w|\approx |\Im w| \le h_{n + 1}\approx h_n$. This and the assertion $-\sigma + 2\gamma < 0$ imply
\begin{gather*}
    \frac{|G(w)|}{|H(w)|}\ls h_n \exp((-\sigma+ 2\gamma)h_n), \qquad w\in \Gamma_n.
\end{gather*}
We also know $\len \Gamma_n\ls h_n$ therefore \eqref{eq: fn as contour rewritten} gives
\begin{gather*}
    |F_n(z)|\ls \frac{h_n^2\exp(-(\sigma + 2\gamma)h_n)}{\dist(z, \Gamma_n)}, \qquad z\notin Q_n.
\end{gather*}

\begin{figure}
      \begin{minipage}[t]{0.45\textwidth}
\centering
\begin{tikzpicture}[scale=0.8, >=stealth]

\def\fin{3.75}

\fill[myred!50, fill opacity=1] (1.5,-1) circle (0.4);
\fill[myred!50, fill opacity=1] (0.2,-1.5) circle (0.2);
\fill[myred!50, fill opacity=1] (-2,-1.8) circle (0.3);
\fill[myred!50, fill opacity=1] (2.5,-2.7) circle (0.27);
\fill[myred!50, fill opacity=1] (-1.5,-4) circle (0.3);
\fill[myred!50, fill opacity=1] (1.3,-4) circle (0.21);
\fill[myred!50, fill opacity=1] (-2,-5) circle (0.4);
\fill[myred!50, fill opacity=1] (-3.5,-5) circle (0.2);

\draw[opacity = 0.2, thin, ->] (-\fin,0) -- (\fin,0) node[above] {$x$};
\draw[opacity = 0.2, thin, ->] (0,-5.5) -- (0,0.5) node[left] {$y$};

\coordinate (A) at (0,0);            
\coordinate (B) at (-3,-5);          
\coordinate (C) at (3,-5);           

\draw[thick] (A) -- (B);

\draw[thick] (A) -- (C);

\coordinate (B1) at ($(A)!0.25!(B)$);
\coordinate (C1) at ($(A)!0.25!(C)$);

\coordinate (B2) at ($(A)!0.45!(B)$);
\coordinate (C2) at ($(A)!0.45!(C)$);

\coordinate (B3) at ($(A)!0.65!(B)$);
\coordinate (C3) at ($(A)!0.65!(C)$);

\coordinate (B4) at ($(A)!0.9!(B)$);
\coordinate (C4) at ($(A)!0.9!(C)$);

\coordinate (D0) at (A);
\coordinate (D1) at ($(B1)!0.5!(C1)$);
\coordinate (D2) at ($(B2)!0.5!(C2)$);
\coordinate (D3) at ($(B3)!0.5!(C3)$);
\coordinate (D4) at ($(B4)!0.5!(C4)$);
\coordinate (D) at ($(B)!0.5!(C)$);

\draw[dashed] (B1 -| -\fin,0) -- (B1 -| \fin,0) node[pos=0.1, above] {\small$-h_0$};
\draw[dashed] (B2 -| -\fin,0) -- (B2 -| \fin,0) node[pos=0.1, above] {\small$-h_1$};
\draw[dashed] (B3 -| -\fin,0) -- (B3 -| \fin,0) node[pos=0.1, above] {\small$-h_2$};
\draw[dashed] (B4 -| -\fin,0) -- (B4 -| \fin,0);

\draw (B1) -- (C1);
\draw (B2) -- (C2);
\draw (B3) -- (C3);
\draw (B4) -- (C4);

\end{tikzpicture}
\captionsetup{width=0.9\textwidth}

    \caption{The boundary of the angle $K_A$ and segments $K_A\cap \{\Im z = -h_n\}$ do not intersect the circles (drawn in red), where \eqref{eq: H(z) asymptotics} may fail.}
    \label{fig:horizontal lines}
 \end{minipage}
  \hfill
  \begin{minipage}[t]{0.45\textwidth}
\centering
\begin{tikzpicture}[scale=0.8]

\def\fin{3.75}

\draw[opacity = 0.2, thin, ->] (-\fin,0) -- (\fin,0) node[above] {$x$};
\draw[opacity = 0.2, thin, ->] (0,-5.5) -- (0,0.5) node[left] {$y$};

\coordinate (A) at (0,0);            
\coordinate (B) at (-3,-5);          
\coordinate (C) at (3,-5);           

\draw[thick] (A) -- (B);

\draw[thick] (A) -- (C);

\coordinate (B1) at ($(A)!0.3!(B)$);
\coordinate (C1) at ($(A)!0.3!(C)$);

\coordinate (B2) at ($(A)!0.55!(B)$);
\coordinate (C2) at ($(A)!0.55!(C)$);

\coordinate (B3) at ($(A)!0.85!(B)$);
\coordinate (C3) at ($(A)!0.85!(C)$);

\coordinate (D0) at (A);
\coordinate (D1) at ($(B1)!0.5!(C1)$);
\coordinate (D2) at ($(B2)!0.5!(C2)$);
\coordinate (D3) at ($(B3)!0.5!(C3)$);
\coordinate (D4) at ($(B)!0.5!(C)$);

\draw (B1) -- (C1);
\draw (B2) -- (C2);
\draw (B3) -- (C3);

\coordinate (Q0) at ($(D0)!0.6!(D1)$);
\node at (Q0) {$Q_0$};
\coordinate (firstldots) at ($(D1)!0.5!(D2)$);
\node at (firstldots) {$\ldots$};
\coordinate (center) at ($(D2)!0.5!(D3)$);
\node at (center) {$Q_n$};

\coordinate (center) at ($(D3)!0.5!(D4)$);
\node at (center) {$\ldots$};

\draw[dashed] (B1 -| -\fin,0) -- (B1 -| \fin,0) node[pos=0.1, above] {\small$-h_0$};
\draw[dashed] (B2 -| -\fin,0) -- (B2 -| \fin,0) node[pos=0.125, above] {\small$-h_{n-1}$};
\draw[dashed] (B3 -| -\fin,0) -- (B3 -| \fin,0) node[pos=0.1, above] {\small$-h_n$};

\node at (3,-0.5) { $\Gamma_0$};
  \draw[opacity=0.5, ->] (2.8,-0.55) -- ($(A)!0.4!(C1)$);

\node at (3,-2) { $\Gamma_n$};
  \draw[opacity=0.5, ->] (2.8,-2.2) -- ($(C2)!0.4!(C3)$);

\draw[ultra thick, color=myred, postaction={decorate},
  decoration={markings, mark=at position 0.5 with {\arrow{>}}}]
  (A) -- (B1);
  \draw[ultra thick, color=myred, postaction={decorate},
  decoration={markings, mark=at position 0.5 with {\arrow{>}}}]
  (B1) -- (C1);
  \draw[ultra thick, color=myred, postaction={decorate},
  decoration={markings, mark=at position 0.5 with {\arrow{>}}}]
  (C1) -- (A);

\draw[ultra thick, color=myred, postaction={decorate},
  decoration={markings, mark=at position 0.5 with {\arrow{>}}}]
  (B2) -- (B3);
  \draw[ultra thick, color=myred, postaction={decorate},
  decoration={markings, mark=at position 0.5 with {\arrow{>}}}]
  (B3) -- (C3);
  \draw[ultra thick, color=myred, postaction={decorate},
  decoration={markings, mark=at position 0.5 with {\arrow{>}}}]
  (C3) -- (C2);
  \draw[ultra thick, color=myred, postaction={decorate},
  decoration={markings, mark=at position 0.5 with {\arrow{>}}}]
  (C2) -- (B2);

\end{tikzpicture}

\captionsetup{width=0.9\textwidth}
\caption{The contour $\Gamma_n$ surrounds the polygon $Q_n$ for every $n\ge 0$.}
    \label{fig:quadrilaterals}
    
    \end{minipage}
\end{figure}

\noindent It follows that the series
\begin{gather*}
g_{even}(z) =  \sum_{n\ge 1}H(z)F_{2n}(z)=\sum_{n \geq 1} \sum_{\lambda \in \Lambda_{2n}} \frac{H(z)G(\lambda)}{(z-\lambda) H'(\lambda)}
\end{gather*}
converges for all $z \in \mathbb{C}$ and defines an entire function that satisfies
\begin{gather*}
    |g_{even}(z)|\ls |H(z)|,\qquad z\notin U_{even} = \bigcup_{n\ge 1}\{z\colon \dist(z, Q_{2n})\le 1\}.
\end{gather*}
The structure of $U_{even}$, see Figure \ref{fig:even_nei}, maximum modulus principle and the Paley-Wiener theorem give $g_{even}\in \PW_{2\gamma}$. 
Similarly we deduce $HF_0\in \PW_\gamma$ and 
\begin{gather*}
g_{odd}(z) = \sum_{n\ge 1}H(z)F_{2n - 1}(z)=\sum_{n \geq 1} \sum_{\lambda \in \Lambda_{2n - 1}} \frac{H(z)G(\lambda)}{(z-\lambda) H'(\lambda)} \in \PW_{2\gamma}.
\end{gather*}

\begin{figure}
\centering
\begin{tikzpicture}[scale =1.5, >=stealth]

\def\fin{3.75}

\draw[opacity = 0.2, thin, ->] (-\fin,0) -- (\fin,0) node[above] {$x$};
\draw[opacity = 0.2, thin, ->] (0,-5.5) -- (0,0.5) node[left] {$y$};

\coordinate (A) at (0,0);            
\coordinate (B) at (-3,-5);          
\coordinate (C) at (3,-5);           

\draw[thick] (A) -- (B);

\draw[thick] (A) -- (C);

\coordinate (B1) at ($(A)!0.15!(B)$);
\coordinate (C1) at ($(A)!0.15!(C)$);

\coordinate (B2) at ($(A)!0.25!(B)$);
\coordinate (C2) at ($(A)!0.25!(C)$);

\coordinate (B3) at ($(A)!0.45!(B)$);
\coordinate (C3) at ($(A)!0.45!(C)$);

\coordinate (B4) at ($(A)!0.6!(B)$);
\coordinate (C4) at ($(A)!0.6!(C)$);

\coordinate (B5) at ($(A)!0.75!(B)$);
\coordinate (C5) at ($(A)!0.75!(C)$);

\coordinate (B6) at ($(A)!0.9!(B)$);
\coordinate (C6) at ($(A)!0.9!(C)$);

\coordinate (D0) at (A);
\coordinate (D1) at ($(B1)!0.5!(C1)$);
\coordinate (D2) at ($(B2)!0.5!(C2)$);
\coordinate (D3) at ($(B3)!0.5!(C3)$);
\coordinate (D4) at ($(B4)!0.5!(C4)$);
\coordinate (D5) at ($(B5)!0.5!(C5)$);
\coordinate (D6) at ($(B6)!0.5!(C6)$);
\coordinate (D) at ($(B)!0.5!(C)$);
\def\h{0.2}
\DrawQuadNeigh{B2}{C2}{C1}{B1}{\h}
\DrawQuadNeigh{B4}{C4}{C3}{B3}{\h}
\DrawQuadNeigh{B6}{C6}{C5}{B5}{\h}

\draw[thick] (A) -- (B);

\draw[thick] (A) -- (C);

\draw (B1) -- (C1);
\draw (B2) -- (C2);
\draw (B3) -- (C3);
\draw (B4) -- (C4);
\draw (B5) -- (C5);
\draw (B6) -- (C6);

\coordinate (center) at ($(D0)!0.5!(D1)$);
\node at (center) {$Q_1$};

\coordinate (center) at ($(D1)!0.5!(D2)$);
\node at (center) {$Q_2$};

\coordinate (center) at ($(D3)!0.5!(D4)$);
\node at (center) {$Q_4$};

\coordinate (center) at ($(D5)!0.5!(D6)$);
\node at (center) {$Q_6$};

\end{tikzpicture}

    \caption{The set $U_{even}$ is the union of disjoint neighborhoods of $Q_{2n}$ because of the assertion $h_{n} + 2 \le h_{n + 1}$.}
    \label{fig:even_nei}
\end{figure}
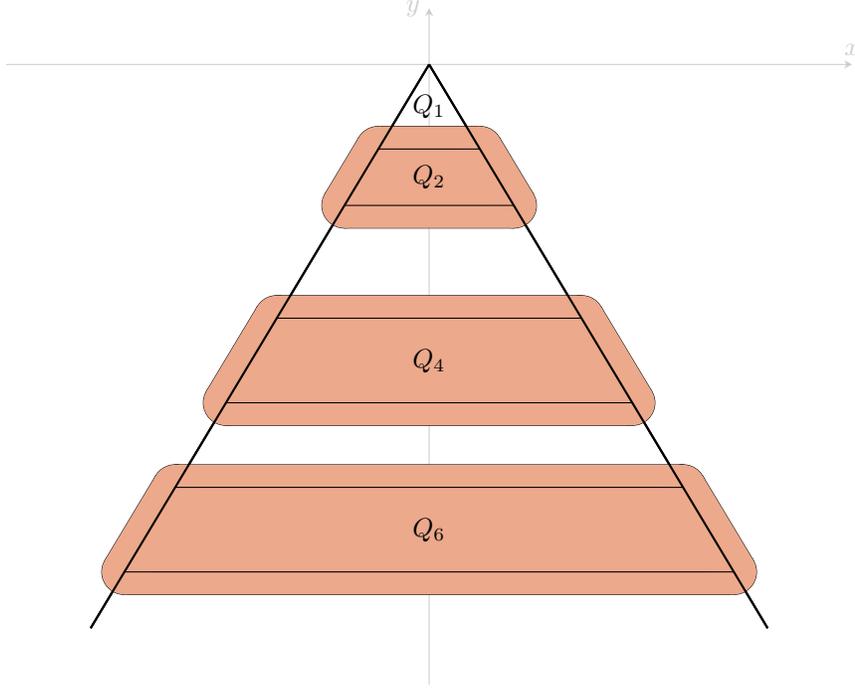

Now the function $g = g_0 + g_{even} + g_{odd}$ almost satisfies all the assertions in \eqref{eq: interpolation problem idea}:
we have $g \in \PW_{2\gamma}\subset\PW_\sigma$ and $g(\lambda) = G(\lambda)$ for all $\lambda \in \Lambda$; however, $g(0) = 0$ may not be satisfied. To overcome this problem, we construct the function $\tilde{g} \in \PW_\sigma $ for the set
\begin{gather*}
\widetilde{\Lambda} = \{ -i \} \cup \{ \lambda - i : \lambda \in \Lambda \}
\end{gather*}
as described above and put $g = \tilde{g}(\cdot + i)$. The proof of Theorem \ref{thm: angle} is finished.

\section{Points in the wider domain. Proof of Theorem \ref{thm: wider domain}}
The proof of Theorem \ref{thm: angle} heavily depends on relation \eqref{eq: H(z) asymptotics} given by the Hayman theorem. Without the assertion $|\Im \lambda|\asymp |\lambda|$ (that is provided by the inclusion $\Lambda\subset K_C$), it is not possible to derive any explicit estimate on $H$ as was done in \eqref{eq: log H 2 gamma}. Thus, different construction is required in the setting of Theorem \ref{thm: wider domain}.

\subsection{Luxemburg-Korevaar Construction}
Let $\Lambda =\{\lambda_n\}_{n\ge 1} $ be the set of points in the complex plane with the counting function $n$ that satisfies the assertion
\begin{gather}
\label{eq: n cond}
\int_0^{\infty} \frac{n(t)}{t^2} \, dt = \sum_{n\ge 1} \frac{1}{|\lambda_n|} < \infty.
\end{gather}
In particular, we have $n(t) = o(t)$ as $t \to \infty$. Define the function 
\begin{gather}
    \label{eq: theta functions def}
    \theta(t) = \int_0^\infty\frac{n(u)}{u}\frac{t^2}{t^2 + u^2}\,du,\qquad t\ge 0,
\end{gather}
and let $w(t)\ge \theta(t) + 2\log t$ be an arbitrary continuous increasing function on $\R_+$. Fix $a > 0$ and for every $k\ge 1$ define $\eps_k$ as the solution of $\psi(1/\eps_k) = k$, where
\begin{gather}
    \label{eq: psi functions def}
    \psi(t) = C \int_a^{t} \frac{w(u)}{u} du,\qquad t\ge a.
\end{gather}
Here $C > 0$ is a constant chosen such that  $\cos(e^{-C}) = 1/e$.
\begin{Thm}[Luxemburg, Korevaar, Theorem 5.2 \cite{luxemburg1971entire}]\label{thm: Luxemburg, Korevaar}
Fix $\sigma > 0$. If the parameter $a$ is large enough then the product
\begin{gather*}
    H(z) = \prod_{n \geq 1} \left( 1 - \frac{z^2}{\lambda_n^2}\right) \prod_{k \geq 1} \cos\left( \eps_k z \right)
\end{gather*}
converges for every $z\in \Cm$ and defines an entire function $H\in \PW_\sigma$.
\end{Thm}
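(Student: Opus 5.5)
The plan is to establish three properties in turn: convergence of both products, a bound on the exponential type, and square-integrability on $\R$. The Paley--Wiener theorem then gives $H\in\PW_\sigma$.

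\emph{Convergence.} By \eqref{eq: n cond} we have $\sum_n|\lambda_n|^{-2}<\infty$, so $\prod_n(1-z^2/\lambda_n^2)$ converges locally uniformly. It is an entire function of order at most one and of zero exponential type, because $n(t)=o(t)$. For the cosine product, note that $w(u)\ge 2\log u$. Hence $\psi(t)\ge C(\log^2t-\log^2a)$, which gives $\eps_k\le \exp(-c\sqrt k)$ for some $c>0$. In particular $\sum_k\eps_k^2<\infty$. Since $|1-\cos(\eps_kz)|\ls \eps_k^2|z|^2e^{\eps_k|z|}$, the product $\prod_k\cos(\eps_kz)$ converges locally uniformly.

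\emph{Type.} Each factor satisfies $|\cos(\eps_k z)|\le e^{\eps_k|\Im z|}$, so $H$ has exponential type at most $\sum_k\eps_k$. Increasing $a$ shifts every $\eps_k$ down: $1/\eps_k$ is the point where $\psi$, now started at the larger $a$, reaches the value $k$. The sum $\sum_k\eps_k=\int_0^\infty \#\{k:\eps_k>s\}\,ds$ is controlled by $\int \psi(1/s)\,ds$ over $s\le 1/a$, and this tends to $0$ as $a\to\infty$. So for $a$ large the type is smaller than $\sigma$.

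\emph{Size on the real line.} For real $x$,
\begin{gather*}
\log\prod_{n}\left|1-\frac{x^2}{\lambda_n^2}\right|\le\int_0^\infty\log\Big(1+\frac{x^2}{t^2}\Big)\,dn(t)=2\theta(|x|)
\end{gather*}
after integrating by parts. It therefore suffices to show that the cosine product beats $\exp(-w(|x|))$ in a suitable averaged sense, since $w\ge\theta+2\log t$ and the extra $2\log t$ supplies integrability. Let $m(x)=\#\{k:\eps_k|x|\ge 1\}$. Then $m(x)\approx\psi(|x|)=C\int_a^{|x|}w(u)\,du/u$, and for $|x|\gg a$ this dominates $w(|x|/e)\cdot C$. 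The constant $C$ was fixed by $\cos(e^{-C})=1/e$ exactly so that a factor $\cos(\eps_k x)$ contributes a loss of $1/e$ whenever $\eps_k x$ lies in a set of proportion roughly one.

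The \textbf{main obstacle} is that $\prod_k|\cos(\eps_k x)|$ does not decay pointwise: there are exceptional $x$ with every $\eps_kx$ near $\pi\mathbb Z$. I would therefore prove an $L^2$ estimate, not a pointwise bound. Split $\R$ into dyadic blocks $|x|\in[2^j,2^{j+1}]$. On each block, bound $\int\prod_{k\in S_j}\cos^2(\eps_kx)\,dx$, where $S_j$ is the set of indices with $\eps_k2^j\ge1$. Do this by expanding each factor as $\cos^2=\tfrac12+\tfrac12\cos(2\eps_k x)$, which turns the product into a Riesz-type trigonometric sum. The frequencies $\eps_k$ decrease super-geometrically (they are lacunary, from the growth of $\psi$), so the cross terms integrate to lower order over a block. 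This yields a bound of order $2^{j}\cdot 2^{-\#S_j}$, up to constants absorbed by the choice of $C$.

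Combining this with $e^{4\theta}$ and $\#S_j\gtrsim w(2^j)\ge\theta(2^j)+2j\log 2$ makes the block contributions summable. If the lacunarity bookkeeping turns out too weak, my fallback is to prove instead $\int\log^+|H(x)|(1+x^2)^{-1}dx<\infty$ and boundedness on a relatively dense set. I would then get the $L^2$ bound from Phragmén--Lindelöf applied to $H$, which is of finite type below $\sigma$. Square-integrability on $\R$ together with type below $\sigma$ gives $H\in\PW_\sigma$ by the Paley--Wiener theorem.
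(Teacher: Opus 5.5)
The paper does not prove this statement (it is quoted from Luxemburg--Korevaar), so your argument has to stand on its own. Two of its steps fail.

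First, the convergence step runs the wrong way. From $w(u)\ge 2\log u$ you get $\psi(t)\ge C(\log^2 t-\log^2 a)$, hence $k=\psi(1/\eps_k)\ge C(\log^2(1/\eps_k)-\log^2 a)$. This bounds $1/\eps_k$ from \emph{above}, so it gives $\eps_k\ge\exp(-\sqrt{k/C+\log^2 a})$, a lower bound. No lower bound on $w$ can make $\sum_k\eps_k$ (or even $\sum_k\eps_k^2$) converge. What is needed is an upper bound. Since $\sum_k\eps_k\le\int_0^\infty dx/\psi^{-1}(x)=C\int_a^\infty w(u)u^{-2}\,du$, with a matching lower bound, one must assume $\int^\infty w(u)u^{-2}\,du<\infty$. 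Your type step silently uses exactly this: $\int_0^{1/a}\psi(1/s)\,ds=C\int_a^\infty w(u)u^{-2}\,du$ tends to $0$ only under that hypothesis. The statement as quoted omits it, but it cannot be dropped: for $w(t)\asymp t^2$ one gets $\eps_k\asymp k^{-1/2}$ and the cosine product diverges. The hypothesis does hold in the application, where $w\ls t^\alpha$. It is also compatible with $w\ge\theta+2\log t$, because $\int^\infty\theta(t)t^{-2}\,dt<\infty$ when $\sum_n|\lambda_n|^{-1}<\infty$.

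Second, your $L^2$ estimate rests on a false premise. The $\eps_k$ are not lacunary; they become \emph{dense} on the logarithmic scale. For $t\ge a$ and fixed $\lambda>1$,
$\#\{k\colon 1/\eps_k\in[t,\lambda t]\}\ge\psi(\lambda t)-\psi(t)-1\ge C\,w(t)\log\lambda-1\to\infty$,
so $\eps_k/\eps_{k+1}\to1$ (for $w\asymp t^\alpha$ one has $\eps_k\asymp k^{-1/\alpha}$). The cross-term cancellation and the block bound $2^j2^{-\#S_j}$ are therefore unsupported; many factors in $S_j$ do not even complete one oscillation on $[2^j,2^{j+1}]$.

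The same density removes your ``main obstacle'': there are no exceptional $x$. Let $y_0=\arccos(1/e)$, $c=\log\frac{\pi-y_0}{y_0}$ and $c'=\pi-y_0$. For every real $x$ with $|x|\ge c'a$, the number of indices with $\eps_k|x|\in[y_0,\pi-y_0]$ is at least
$\psi(|x|/y_0)-\psi(|x|/c')-1\ge c\,C\,w(|x|/c')-1$.
Each of these factors satisfies $|\cos(\eps_kx)|\le1/e$, and every other factor has modulus at most $1$. Hence, pointwise,
$\prod_k|\cos(\eps_kx)|\le e\exp\big(-c\,C\,w(|x|/c')\big)$.
Combine this with your correct estimate $\prod_n|1-x^2/\lambda_n^2|\le e^{2\theta(|x|)}$, with $w\ge\theta+2\log t$, and with $\theta(t/c')\ge\theta(t)/c'^2$. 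For $C$ large this gives $|H(x)|\ls(1+|x|)^{-1}$, so $H\in L^2(\R)$ without any averaging. Enlarging $C$ is harmless, since the type is still at most $C\int_a^\infty w(u)u^{-2}\,du$, which is small for large $a$. Note also that, as printed, no $C>0$ satisfies $\cos(e^{-C})=1/e$, because $\cos(e^{-C})>\cos 1>1/e$.

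Your fallback would not rescue the argument either. Boundedness on a relatively dense set together with Phragm\'en--Lindel\"of yields at best boundedness of $H$ on $\R$, not square integrability.
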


{There exist different ways to construct function from $\PW_\sigma$ with prescribed smallness (or even two-sided estimates) on the real line, see, e.g., Chapters X and XI in the book \cite{KoosisLog2} and the papers \cite{belov, mashnazkhav}}.\\
The following result states that $H(z)$ cannot be very small when $z$ is separated from the zeroes of $H$. We prove it in the next section.
\begin{Thm}\label{thm: H lower bound}
    If $n(t) \ls t^\alpha$ for some $0 <\alpha < 1$ and $a > 0$ is large then the estimate
    \begin{gather*}
    \log|H(z)| \gs -  |z|^{\alpha}\log |z| 
    \end{gather*}
holds for all $z\in \Cm$ that satisfy
\begin{gather*}
    \dist(z, \Lambda) \geq 1,\qquad \dist(-z, \Lambda) \geq 1, \qquad |\Im z|\ge 1.
\end{gather*}
\end{Thm}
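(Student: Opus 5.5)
We split $\log|H(z)|$ into the two factors and bound each from below separately:
\begin{gather*}
    \log|H(z)| = \sum_{n\ge 1}\log\Big|1 - \frac{z^2}{\lambda_n^2}\Big| + \sum_{k\ge 1}\log|\cos(\eps_k z)| =: S_1(z) + S_2(z).
\end{gather*}
Throughout, $|z|$ is large and $z$ satisfies the three separation assumptions of Theorem \ref{thm: H lower bound}.

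\textbf{The canonical product $S_1$.} Write $1 - z^2/\lambda_n^2 = (1 - z/\lambda_n)(1 + z/\lambda_n)$ and treat the two factors the same way, using $\dist(z,\Lambda)\ge 1$ for the first and $\dist(-z,\Lambda)\ge 1$ for the second. Split the indices into three groups.
\begin{itemize}
    \item \emph{Far points}, $|\lambda_n| > 2|z|$. Here $\log|1 - z/\lambda_n| \ge -2|z|/|\lambda_n|$, so the total contribution is at least $-2|z|\int_{2|z|}^\infty n(t)\,t^{-2}\,dt \gs -|z|^\alpha$, because $n(t)\ls t^\alpha$ with $\alpha<1$.
    \item \emph{Small points}, $|\lambda_n| < |z|/2$. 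Here $|1 - z/\lambda_n| \ge 1$, so these terms are nonnegative and can be dropped.
    \item \emph{Comparable points}, $|z|/2\le |\lambda_n|\le 2|z|$. There are at most $n(2|z|)\ls |z|^\alpha$ of them, and each satisfies $|1 - z/\lambda_n| = |\lambda_n - z|/|\lambda_n| \ge 1/(2|z|)$. Each therefore contributes at least $-\log(2|z|)$, for a total of at least $-C|z|^\alpha\log|z|$.
\end{itemize}
Hence $S_1(z)\gs -|z|^\alpha\log|z|$.

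\textbf{The cosine product $S_2$.} Put $z = x+iy$ with $|y|\ge 1$. From
\begin{gather*}
    |\cos(\eps z)|^2 = \cos^2(\eps x) + \sinh^2(\eps y) \ge \sinh^2(\eps y)
\end{gather*}
we get a lower bound valid for every real $x$, so the real zeros of $H$ play no role. The sequence $\eps_k$ decreases to $0$ and $\sum_k\eps_k<\infty$, since $H\in\PW_\sigma$. Split the indices according to the size of $\eps_k|z|$.
\begin{itemize}
    \item \emph{Large arguments}, $\eps_k|z|\ge 1$ and $\eps_k|y|\ge 1$. Then $|\cos(\eps_k z)|\ge \sinh(\eps_k|y|)\ge c\,e^{\eps_k|y|}$ with an absolute $c>0$.
    \item \emph{Intermediate indices}, $\eps_k|z|\ge 1$ but $\eps_k|y| < 1$. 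Then $|\cos(\eps_k z)|\ge \sinh(\eps_k|y|)\ge \eps_k|y|\ge \eps_k$.
    \item \emph{Small arguments}, $\eps_k|z|\le 1$, where the cosine is close to $1$. For $|\zeta|\le 1$ we have $|\cos\zeta - 1|\le C|\zeta|^2$, which gives $\log|\cos(\eps_k z)|\ge -C'\eps_k^2|z|^2$. The normalization $\cos(e^{-C})=1/e$ guarantees that $\cos$ does not vanish on the unit disc. The sum of these terms is at least $-C'|z|^2\sum_{\eps_k\le 1/|z|}\eps_k^2$.
\end{itemize}

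\textbf{Counting the indices (main obstacle).} The hardest part is to show that all of these contributions are $O(|z|^\alpha\log|z|)$. The first group contributes at least $-K\log(1/c)$, the second at least $-\sum\log(1/\eps_k)$, and the third the bound above, where $K = \#\{k:\eps_k\ge 1/|z|\}$. Since $\psi$ is increasing, $K = \lfloor\psi(|z|)\rfloor$. Moreover $\psi(t) = C\int_a^t w(u)\,du/u$, where $w$ may be taken equal to $\theta(t)+2\log t$, and $\theta(t)\ls t^\alpha$ when $n(u)\ls u^\alpha$. This gives $\psi(t)\ls t^\alpha + \log^2 t$, so $K\ls |z|^\alpha$.

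For the intermediate indices, $\eps_k\ge 1/|z|$ gives $\log(1/\eps_k)\le\log|z|$, so their total is at least $-K\log|z|\gs -|z|^\alpha\log|z|$.

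For the small arguments, inverting $\psi(1/\eps_k) = k$ with $\psi(t)\gs \log^2 t$ gives the decay of $\eps_k$. Combined with summation by parts against the counting function $k\mapsto\psi(1/\eps_k)$, this should yield
\begin{gather*}
    |z|^2\sum_{\eps_k\le 1/|z|}\eps_k^2 \ls |z|^2\int_{|z|}^\infty \frac{d\psi(t)}{t^2} \ls \sup_{t\ge |z|}\frac{w(t)}{1} \cdot\frac{|z|^2}{|z|^2}.
\end{gather*}
More carefully, this integral is $\ls |z|^2\int_{|z|}^\infty w(t)\,t^{-3}\,dt \ls w(|z|)\ls |z|^\alpha$, using that $w$ grows polynomially with exponent below $2$.

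Adding the bounds on $S_1$ and $S_2$ proves the theorem. I expect the delicate point to be exactly this bookkeeping: one must fix $w$ comparable to $\theta(t)+2\log t$, rather than arbitrary, so that $\psi$ is controlled from above as well as from below.
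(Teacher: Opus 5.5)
Your proposal is correct and follows essentially the paper's argument. It uses the same splitting of the canonical product at $|\lambda_n|\approx 2|z|$ and of the cosine product at $\eps_k\approx 1/|z|$, counts the large $\eps_k$ by $\psi$, and controls the remaining tails by $\xi$ and by $\int_{|z|}^\infty w(t)t^{-3}\,dt$; where the paper uses the linear bound and $\gamma$, you use a quadratic bound. Your identity $|\cos(x+iy)|^2=\cos^2x+\sinh^2y$ plays the role of the paper's lemma $|\cos\zeta|\ge|\Im\zeta|/(2|\zeta|)$.

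The garbled display involving $\sup_{t\ge|z|}w(t)$ is a harmless slip, since your corrected bound $|z|^2\int_{|z|}^\infty w(t)t^{-3}\,dt\ls|z|^\alpha$ is exactly what is needed. Likewise, the normalization $\cos(e^{-C})=1/e$ has nothing to do with $\cos$ being nonvanishing on the unit disc, which holds anyway.
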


\subsection{Proof of Theorem \ref{thm: H lower bound}}
Recall that $n$ is a counting function of $\Lambda$, $\psi$ is given by \eqref{eq: psi functions def} and each $\eps_k$ is the solution of $\psi(1/\eps_k) = k$. The function $\theta$ is defined in \eqref{eq: theta functions def} and $w$ satisfies $w\ge \theta + 2\log t$. Let us introduce two more functions
\begin{gather}
\label{eq: xi function def}
\xi(t) = \int_t^\infty\frac{n(u)}{u^2}\, du = \sum_{|\lambda_k|\ge t}\frac{1}{|\lambda_k|} + \frac{n(t)}{t},\qquad t\ge 0,
\\
\nonumber
\gamma(t) = \int_t^{\infty} \frac{w(u)}{u^2} du, \qquad t\ge 0.
\end{gather}
\begin{Lem}
    For every nonzero $z \in \Cm$ we have
\begin{gather*}
|\cos(z)| \geq \frac{|\Im z|}{2|z|}.
\end{gather*}
\end{Lem}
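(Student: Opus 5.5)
We prove $|\cos z| \ge \frac{|\Im z|}{2|z|}$ for every nonzero $z=x+iy\in\Cm$. The plan is to compute $|\cos z|$ exactly and reduce the inequality to two elementary estimates. We have
\begin{gather*}
|\cos z|^2 = \cos^2 x + \sinh^2 y .
\end{gather*}
In particular $|\cos z|\ge |\sinh y|\ge |y|$. So the statement is immediate whenever $|z|\ge \tfrac12$, since then $\frac{|y|}{2|z|}\le |y|$. It therefore remains to treat small $z$, that is $|z|<\tfrac12$.

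For $|z|<\tfrac12$ we drop the $\sinh$ term and use only $|\cos z|\ge|\cos x|$. Here $|x|\le|z|<\tfrac12<\pi/3$, so $\cos x>\tfrac12$. On the other hand $\frac{|y|}{2|z|}\le \tfrac12$ always, because $|y|\le |z|$. Hence
\begin{gather*}
|\cos z|\ge \cos x > \tfrac12 \ge \frac{|\Im z|}{2|z|}.
\end{gather*}

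Combining the two cases gives the lemma for all nonzero $z$. In fact the second case shows more generally that $\frac{|\Im z|}{2|z|}\le\tfrac12$ and that $|\cos z|\ge \tfrac12$ whenever $|\Re z|\le \pi/3$. Only this trivial bound is needed near the origin. The factor $|\Im z|/|z|$ in the statement matters only to control the zeros of $\cos$ on the real axis, and that is handled by $\sinh^2 y$ once $|z|\ge\tfrac12$.

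There is no real obstacle. The only point needing care is the correct split: for large $|z|$, and in particular near the real zeros $\pi/2+\pi m$ of $\cos$, the term $\sinh^2 y\ge y^2$ does all the work. Near the origin, where $|y|$ and $|z|$ may both be tiny, the denominator $|z|$ could make the first estimate useless, so there we use $\cos x$ instead.
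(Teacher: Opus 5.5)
Your proof is correct, and it reaches the bound by a somewhat different route than the paper.

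The paper writes $z=re^{i\theta}$ and splits $2\cos z$ into its real part $2\cos(\Re z)\cosh(\Im z)$ and imaginary part $-2\sin(\Re z)\sinh(\Im z)$. It then considers two cases.
\begin{itemize}
\item If $|\cos(\Re z)|\ge\sqrt2/2$, the real part alone gives $|\cos z|\ge\sqrt2/2\ge|\sin\theta|/2$.
\item Otherwise $|\sin(\Re z)|\ge\sqrt2/2$, which forces $|z|\ge|\Re z|\ge\pi/4$. The imaginary part then gives $|\cos z|\ge\frac{\sqrt2}{2}\,r|\sin\theta|\ge\frac{\sqrt2\,\pi}{8}|\sin\theta|$, and one checks that $\frac{\sqrt2\pi}{8}>\frac12$.
\end{itemize}
Because the paper uses only one part at a time, it has to keep the factor $\sin(\Re z)$ in front of $\sinh(\Im z)$ under control. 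That is why its case split is by the phase of $\Re z$.

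Your identity $|\cos z|^2=\cos^2 x+\sinh^2 y$ removes that factor. It gives $|\cos z|\ge\max(|\cos x|,|\sinh y|)$ unconditionally, so you can split by $|z|$ instead. The two ingredients are the same in both proofs: the cosine of the real part is bounded below near the origin, and $|\sinh y|\ge|y|$ takes over away from it. Your version is shorter and needs no numerical constant check. It also yields $|\cos z|\ge\max(|\cos\Re z|,|\Im z|)$, which implies the lemma and is sharper once $|z|\ge 1/2$.

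In Lemma~\ref{Lem: cos estimate} the inequality is applied only to $\eps_k z$ with $|\eps_k z|>1/2$. There your first case alone already gives what is needed.
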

\begin{proof} Let $z = r e^{i\theta} = r(\cos \theta + i \sin \theta)$. The required inequality is equivalent to $|\cos(z)|\ge |\sin\theta|/2$.
We have $i z = r ( -\sin\theta + i \cos\theta )$ and 
    \begin{gather*}
e^{i z} = e^{r(-\sin\theta + i\cos\theta)} = e^{-r\sin\theta} \left( \cos(r(\cos\theta)) + i \sin(r(\cos\theta)) \right),
\\
e^{-i z} = e^{r(\sin\theta - i\cos\theta)} = e^{r\sin\theta} \left( \cos(r(\cos\theta)) - i \sin(r(\cos\theta)) \right),
\\
2 \cos z = \cos(r \cos\theta) \left( e^{-r \sin\theta} + e^{r \sin\theta} \right)  + i\sin(r \cos\theta)\left( e^{-r \sin\theta} - e^{r \sin\theta} \right).
\end{gather*}
If $|\cos(r\cos\theta)|\ge \sqrt{2}/2$ then we write 
\begin{gather*}
    |\cos z| \ge |\Re\cos z| = \bigg|\frac{\cos(r \cos\theta) \left( e^{-r \sin\theta} + e^{r \sin\theta} \right)}{2}\bigg|\ge |\cos(r \cos\theta)| \ge \frac{\sqrt{2}}{2} \ge \frac{|\sin\theta|}{2}.
\end{gather*}
Otherwise the inequality $|\sin(r\cos\theta)|\ge \sqrt{2}/2$ holds. This implies $r\ge r|\cos\theta|\ge \pi/4$ and
\begin{align*}
    |\cos z| \ge |\Im\cos z| &= \bigg|\frac{\sin(r \cos\theta)\left( e^{-r \sin\theta} - e^{r \sin\theta} \right)}{2}\bigg|
    \\
    &\ge  |\sin(r \cos\theta)|\cdot |r\sin\theta| \ge \frac{\sqrt{2}}{2}\cdot \frac{\pi|\sin\theta|}{4}\ge \frac{|\sin\theta|}{2}.
\end{align*}
The proof is concluded.
\end{proof}
\begin{Lem}\label{Lem: cos estimate}
    For every $ z \in \Cm$ with $|\Im z|\ge 1$, we have
\begin{gather*}
\log \bigg| \prod_{k \geq 1} \cos\left(\eps_k z\right) \bigg| \gs -| z| \cdot \gamma(2| z|) - \psi(2| z|)\cdot \log| z|.
\end{gather*}
\end{Lem}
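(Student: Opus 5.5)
We split the product according to the size of $\eps_k|z|$. Fix $z$ with $|\Im z|\ge 1$ and put $r = |z|$. The sequence $\eps_k$ decreases to zero, since $\psi$ is increasing and unbounded and $\psi(1/\eps_k) = k$. We separate the indices into two groups.
\begin{itemize}
\item The \emph{large} indices are those with $\eps_k r \le 1/2$, roughly $1/\eps_k \ge 2r$, i.e.\ $k \ge \psi(2r)$.
\item The \emph{small} indices are the remaining ones. There are at most $\psi(2r)+1$ of them.
\end{itemize}
The two groups correspond to the two terms in the claimed bound.

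\textbf{Small indices.} For these we use the previous lemma. Since $\eps_k z\neq 0$, it gives
\begin{gather*}
|\cos(\eps_k z)| \ge \frac{|\Im(\eps_k z)|}{2|\eps_k z|} = \frac{|\Im z|}{2|z|} \ge \frac{1}{2|z|}.
\end{gather*}
Each such factor therefore contributes at least $-\log(2|z|)$. The total over the small indices is $\gs -\psi(2|z|)\log|z|$, at least for $|z|\ge 2$, say. For bounded $|z|$ with $|\Im z|\ge1$, only finitely many (a bounded number of) factors are small, and the bound is trivial up to constants. This is exactly where the assumption $|\Im z|\ge 1$ enters.

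\textbf{Large indices.} For $|\zeta|\le 1/2$ we have $\log|\cos\zeta| \ge -c|\zeta|^2$, because $\cos$ has no zeros in the disk $|\zeta|\le 1/2$ and $\log|\cos\zeta| = \Re\log\cos\zeta = O(|\zeta|^2)$. Hence the large indices contribute at least $-c\,r^2\sum_{k\ge k_0}\eps_k^2$, where $k_0\approx \psi(2r)$.

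It remains to bound this tail sum by a Stieltjes integral in the variable $t = 1/\eps$. The counting function of the points $1/\eps_k$ is $\lfloor \psi(t)\rfloor$, so
\begin{gather*}
\sum_{1/\eps_k \ge 2r}\eps_k^2 \le \int_{2r}^\infty \frac{d\psi(t)}{t^2} + O\!\left(\frac{1}{r^2}\right) = C\int_{2r}^\infty \frac{w(t)}{t^3}\,dt + O(r^{-2}).
\end{gather*}
Since $1/t\le 1/(2r)$ on the range of integration, this is $\le \frac{C}{2r}\gamma(2r) + O(r^{-2})$. The boundary term comes from the integer part, i.e.\ from at most one extra factor, which is handled either by absorbing it into the small group or by noting it is $O(r^{-2})$. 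Multiplying by $r^2$, the large indices contribute $\gs -r\gamma(2r) - O(1)$.

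Adding the two estimates gives the lemma. The main care is in the bookkeeping at the threshold index $k_0$ and in confirming that the constants are uniform in $z$. The only genuinely analytic input is the elementary bound $\log|\cos\zeta|\ge -c|\zeta|^2$ for $|\zeta|\le 1/2$.
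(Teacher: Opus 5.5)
Your proof is correct and follows the paper's argument: you split the indices at $\varepsilon_k|z| = 1/2$, apply the previous lemma (with $|\Im z|\ge 1$) to the at most $\psi(2|z|)$ factors with $\varepsilon_k|z|>1/2$, and bound the tail by a sum-to-integral comparison through $d\psi(t) = C\,w(t)\,dt/t$. The differences are minor: you use the quadratic bound $\log|\cos\zeta|\ge -c|\zeta|^2$, reaching $r^2\int_{2r}^\infty w(t)t^{-3}\,dt\le \tfrac{r}{2}\gamma(2r)$, where the paper uses the linear bound $-c|\zeta|$ and sums $\varepsilon_k$ directly; you also keep the boundary term at the threshold index, which the paper's comparison drops, and the resulting additive $O(1)$ is absorbed because $r\gamma(2r)\ge w(2r)/2\ge \log(2r)$.
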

\begin{proof} Consider the following disjoint sets
\begin{gather*}
    \mathcal{A} = \left\{ k \colon \varepsilon_k \leq \frac{1}{2| z|} \right\}, 
    \qquad 
    \mathcal{B} = \left\{ k \colon \frac{1}{2| z| }< \eps_k\right\}.
\end{gather*}
The set $\mathcal{B}$ is finite, we have $\# \mathcal{B} = \psi(2| z|)$. The previous lemma gives
\begin{gather*}
    \log \left| \prod_{k\in \mathcal{B}} \cos(\varepsilon_k  z) \right|
\geq
\#\mathcal{B}\cdot \log \left( \frac{\operatorname{Im} z}{2| z|} \right)
=
    - \log \frac{2| z|}{\Im z} \cdot
    \psi(2| z|)\gs -\log | z| \cdot
    \psi(2| z|).
\end{gather*}
For every $k\in \mathcal{A}$ we have $|\eps_k z| < 1/2$ hence $\log|\cos(\eps_k z)| \gs -|\eps_k z|$. This implies
\begin{gather*}
\log\left| \prod_{k\in\mathcal{A}} \cos(\varepsilon_k z) \right|
\gs
 -|z| \sum_{k\in \mathcal{A}} \eps_k. 
\end{gather*}
To conclude the proof of the lemma we notice
\begin{gather*}
    \sum_{k\in\mathcal{A}} \eps_k = \sum_{k\ge \psi(2|z|)}\frac{1}{\psi^{-1}(k)} 
    < \int_{\psi(2|z|)}^\infty \frac{dx}{\psi^{-1}(x)} 
    = \int_{2|z|}^\infty\frac{d\psi(x)}{x} 
    \asymp \int_{2|z|}^\infty\frac{w(u)}{u^2} du\ls \gamma(2|z|). 
\end{gather*}
\end{proof}
\begin{Lem}\label{Lem: prod estimate}
    If $z$ satisfies $\dist(z, \Lambda) \geq 1$ and $\dist(-z, \Lambda) \geq 1$ then
\begin{gather*}
\log \bigg| \prod_{k \geq 1} \left( 1 - \frac{z^2}{\lambda_k^2} \right) \bigg| \gs -|z| \cdot \xi(2|z|) - \log|z| \cdot n(2|z|).
\end{gather*}
\end{Lem}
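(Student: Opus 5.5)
We split the factors in the same way as in Lemma \ref{Lem: cos estimate}. Put $r = |z|$ and write
\begin{gather*}
\mathcal{A} = \{k\colon |\lambda_k| \ge 2r\},\qquad \mathcal{B} = \{k\colon |\lambda_k| < 2r\}.
\end{gather*}
The plan is to estimate $\log|1 - z^2/\lambda_k^2|$ from below separately on each group and then add the bounds.

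\emph{Far factors ($k\in\mathcal{A}$).} Here $|z/\lambda_k|\le 1/2$, so $|z^2/\lambda_k^2|\le 1/4$. Since $\log|1-u|\ge -c|u|$ for $|u|\le 1/4$, each factor gives $\log|1-z^2/\lambda_k^2| \gs -r^2/|\lambda_k|^2 \ge -\tfrac{r}{2}\cdot\tfrac{1}{|\lambda_k|}$. Summing over $\mathcal{A}$ yields
\begin{gather*}
\sum_{k\in\mathcal{A}}\log\Big|1-\frac{z^2}{\lambda_k^2}\Big| \gs -r\sum_{|\lambda_k|\ge 2r}\frac{1}{|\lambda_k|}\ge -r\,\xi(2r),
\end{gather*}
where the last step uses the representation \eqref{eq: xi function def}, because the term $n(t)/t$ there is nonnegative.

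\emph{Near factors ($k\in\mathcal{B}$).} There are $n(2r)$ of them. Factor
\begin{gather*}
1-\frac{z^2}{\lambda_k^2} = \frac{(\lambda_k - z)(\lambda_k+z)}{\lambda_k^2}.
\end{gather*}
The hypotheses give $|\lambda_k - z|\ge 1$ and $|\lambda_k+z| = |-z-\lambda_k|\ge 1$, while $|\lambda_k|<2r$. Hence each factor has modulus at least $1/(4r^2)$, and
\begin{gather*}
\log\Big|1-\frac{z^2}{\lambda_k^2}\Big| \ge -\log(4r^2) \gs -\log r.
\end{gather*}
This needs $r$ bounded below. For small $|z|$ the set $\mathcal{B}$ contains only the finitely many $\lambda_k$ in a fixed disc, and the product is bounded below by a constant on the admissible region. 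Alternatively, $|\lambda_k|$ is bounded below because $\Lambda$ has no zero. Summing over $\mathcal{B}$ gives the bound $-n(2r)\log r$.

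Adding the two bounds gives the lemma. The product converges absolutely by \eqref{eq: n cond}, so taking logarithms termwise is justified.

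The only delicate point I expect is the bookkeeping for small $|z|$, where $\log|z|$ may be nonpositive. There one either absorbs the finitely many near factors into the implicit constant or notes that the statement is only used for large $|z|$. I would handle this by proving the estimate for $|z|\ge 2$ and treating the bounded remaining region by compactness. On that region the product is continuous and nonvanishing, being at distance $\ge 1$ from its zeros $\pm\lambda_k$, so its modulus has a positive lower bound.
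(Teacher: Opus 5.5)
Your proof is correct and is essentially the paper's argument. It uses the same split at $|\lambda_k|=2|z|$. For the far factors it uses $\log|1-u|\gs-|u|$ together with $|z/\lambda_k|^2\le\tfrac12|z/\lambda_k|$. For the at most $n(2|z|)$ near factors it uses $|(\lambda_k-z)(\lambda_k+z)|/|\lambda_k|^2\ge 1/(4|z|^2)$.

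Your remark on small $|z|$ covers a point the paper passes over silently: its step $-\log(4|z|^2)\gs-\log|z|$ needs $|z|$ bounded away from $1$. Compactness only gives $\log\bigl|\prod_k(1-z^2/\lambda_k^2)\bigr|\ge -M$ on the bounded region, but that additive-constant form is all that Theorem \ref{thm: H lower bound} and its application need.
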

\begin{proof} As in the previous lemma we partition positive integers into the two following sets:
\begin{gather*}
\mathcal{C} = \left\{ k \colon| \lambda_k | \leq 2 | z | \right\}, \qquad
\mathcal{D} = \left\{ k \colon 2 | z | < |\lambda_k| \right\}.
\end{gather*}
The uniform estimate $\log |1 - w|\gs -|w|$ for $w$ with $|w|< 1/2$ gives 
\begin{gather*}
    \log|1 -  z ^2/\lambda_k^2| \gs -| z /\lambda_k|^2 \ge -| z /\lambda_k|, \qquad k\in \mathcal{D}.
\end{gather*}
The latter implies 
\begin{gather*}
    \log \left| \prod_{k\in \mathcal{D}} \left( 1 - \frac{ z ^2}{\lambda_k^2} \right) \right|\gs -\sum_{k\in \mathcal{D}}\frac{| z |}{|\lambda_k|} \ge -| z |\xi(2|\lambda|).
\end{gather*}
On the other hand, for every $k\in \mathcal{C}$ we can use the estimate
\begin{gather*}
    \left| 1 - \frac{ z ^2}{\lambda_k^2} \right| = \frac{|( z  - \lambda_k)( z  + \lambda_k)|}{|\lambda_k|^2} \ge \frac{1}{|\lambda_k|^2} \ge \frac{1}{4| z |^2}. 
\end{gather*}
The set $\mathcal{C}$ is finite, we have $\# \mathcal{C} \le n(2| z |)$ therefore
\begin{gather*}
\log \left| \prod_{k\in \mathcal{C}} \left( 1 - \frac{ z ^2}{\lambda_k^2} \right) \right| \gs -\log(4| z |^2)\cdot \# \mathcal{C}
\gs -\log| z |\cdot n(2| z |).
\end{gather*}
To finish the proof we combine the obtained estimates for products over $\mathcal{C}$ and $\mathcal{D}$. 
\end{proof}

\begin{proof}[Proof of Theorem \ref{thm: H lower bound}]
From the definitions of $\theta$ and $\psi$ we know
\begin{gather*}
    \theta(t) = \int_0^\infty\frac{n(u)}{u}\frac{t^2}{t^2 + u^2}\,du\ls \int_0^\infty u^{\alpha - 1}\frac{t^2}{t^2 + u^2}\,du = t^{\alpha}\int_0^\infty \frac{x^{\alpha - 1}}{1 + x^2}\,dx\ls t^{\alpha},
    \\
    \xi(t) = \int_t^\infty\frac{n(u)}{u^2}\, du \ls  \int_t^\infty u^{\alpha - 2}\, du \ls t^{\alpha - 1}.
\end{gather*}
We can also choose $w(t)\ls t^\alpha$ so that $\psi$ and $\gamma$ satisfy
\begin{gather*}
    \psi(t) \ls \int_1^{t} \frac{w(u)}{u} dt \ls \int_1^{t} u^{\alpha - 1} dt \ls t^\alpha,
    \\
    \gamma(t) = \int_t^{\infty} \frac{w(u)}{u^2} du \ls \int_t^{\infty} u^{\alpha - 2}\, du\ls t^{\alpha - 1}.
\end{gather*}
The required inequality for $H$ now follows from Lemmas \ref{Lem: cos estimate} and \ref{Lem: prod estimate}:
\begin{align*}
    \log|H( z )| &= \log \bigg| \prod_{k \geq 1} \cos\left(\eps_k z \right) \bigg| + \log \bigg| \prod_{n \geq 1} \left( 1 - \frac{ z ^2}{\lambda_n^2} \right) \bigg|
    \\
    &\gs -| z | \cdot \gamma(2| z |) - \psi(2| z |)\cdot \log| z | - | z | \cdot \xi(2| z |) - \log| z | \cdot n(2| z |) \gs -  | z |^{\alpha}\log | z |.
\end{align*}
\end{proof}

\subsection{Points Clustering}\label{section: clustering}
In this section we describe the construction of contours that will be used in the proof of Theorem \ref{thm: wider domain}, recall Figure \ref{fig:general contours}. Fix a number $r > 0$ and let
\begin{gather}
\label{eq: Vr def}
V_r = \big\{z\colon \dist(z, \Lambda)\le r\big\} = \bigcup_{n \geq 1} \big\{z : |\lambda_n - z| \le r \big\}
\end{gather}
be the neighborhood  of $\Lambda$ of the radius $r$.
Denote the connected components of $V_r$ by $U_{r,1}, U_{r,2},\ldots$ and let $\Gamma_{r,n}$ be the contours that run along the boundary of $ U_{r,n}$, see Figure \ref{fig:cluster}. We remark that $\Gamma_{r,n}$ may be disconnected. Let us show that each $U_{r, n}$ is bounded and the length of $\Gamma_n$ is not very large, provided that the set $\Lambda$ satisfies assertion \eqref{eq: n cond}.
\begin{figure}
\centering
\begin{tikzpicture}
  \def\r{0.8}

  \def\redCluster{(-3.2,-3), (-1.8,-2.2), (-2.4,-3.5), (-4.1,-2.5)}
    \def\greenCluster{(1.3,-0.2), (1.6,-1)}
     \def\blueClusterfirst{(-2.5,-3.5), (-1.5,-2.9), (-0.7,-2.4)}
     \def\blueClustersecond{(5.2,-2.5), (6.4,-2.2), (5.2,-0.2), (6.2,-1), (4.2,-1.4)}
  
  \def\rr{0.83}
\tikzset{transparent fill/.style={fill=red, fill opacity=0.4}}

\draw[opacity = 0.2, thin, ->] (-6,2) -- (8,2) node[above] {$x$};
\draw[opacity = 0.2, thin, ->] (1,-5.5) -- (1,2.5) node[left] {$y$};
    
  \foreach \pos in \redCluster {
    \draw[black, ultra thick] \pos circle (\rr);
  }

    \foreach \pos in \redCluster {
    \fill[myred!50, fill opacity=1] \pos circle (\r);
  }

  \foreach \pos in \redCluster {
    \fill[black, thick] \pos circle (0.06);
  }

  \foreach \pos in \blueClustersecond {
    \draw[black, ultra thick] \pos circle (\rr);
  }
  \foreach \pos in \blueClustersecond {
    \fill[myred!50, fill opacity=1] \pos circle (\r);
  }
  \foreach \pos in \blueClustersecond {
    \fill[black, thick] \pos circle (0.06);
  }

  \foreach \pos in \greenCluster {
    \draw[black, ultra thick] \pos circle (\rr);
  }

  \foreach \pos in \greenCluster {
    \fill[myred!50, fill opacity=1] \pos circle (\r);
  }

  \foreach \pos in \greenCluster {
    \fill[black, thick] \pos circle (0.06);
  }

\draw[ultra thick, ->] (1.3,-0.2 +\rr  ) arc (90:130:\rr);
\draw[ultra thick, ->] (1.6 - \rr,-1) arc (180:270:\rr);

\draw[ultra thick, ->] (-1.8+\rr,-2.2) arc (0:75:\rr);
\draw[ultra thick, ->] (-2.4,-3.5 - \rr) arc (-90:-70:\rr);
\draw[ultra thick, ->] (-4.1-\rr,-2.5) arc (180:230:\rr);

\draw[ultra thick, ->] (5.2,-2.5 -\rr) arc (-90:-70:\rr);
\draw[ultra thick, ->] (5.2,-2.5 +\rr) arc (90:91:\rr);

\draw[ultra thick, ->] (5.2,-0.2 -\rr) arc (-90:-85:\rr);

\draw[ultra thick, ->] (6.2 + \rr,-1) arc (0:10:\rr);

\draw[ultra thick, ->] (4.2 - \rr,-1.4) arc (180:181:\rr);


\node at (-5.5,0) { \Large$\Gamma_2$};
\draw[opacity=0.5, ->] (-5.5,-0.2) --  (-4.5,-1.6);
\node at (-4.3,0) { \Large$U_2$};
\draw[opacity=0.5, ->] (-4.25,-0.2) --  (-2.8,-2.8);

\node at (3,-4.5) { \Large$\Gamma_1$};
\draw[opacity=0.5, ->] (2.9,-4.2) --  (3.6,-2.1);
\draw[opacity=0.5, ->] (3,-4.2) --  (4.9,-1.7);
\node at (4,-4.5) { \Large$U_1$};
\draw[opacity=0.5, ->] (4.,-4.2) --  (5,-2.8);

\node at (-1.2,1) { \Large$\Gamma_0$};
\draw[opacity=0.5, ->] (-5.5,-0.2) --  (-4.5,-1.6);
\node at (-1.2, 0.) { \Large$U_0$};
\draw[opacity=0.5, ->] (-0.9,0.9) --  (0.45,0.1);
\draw[opacity=0.5, ->] (-0.9,-0.1) --  (1.1,-0.6);

\end{tikzpicture}
    \caption{Clusters visualization}
    \label{fig:cluster}
\end{figure}
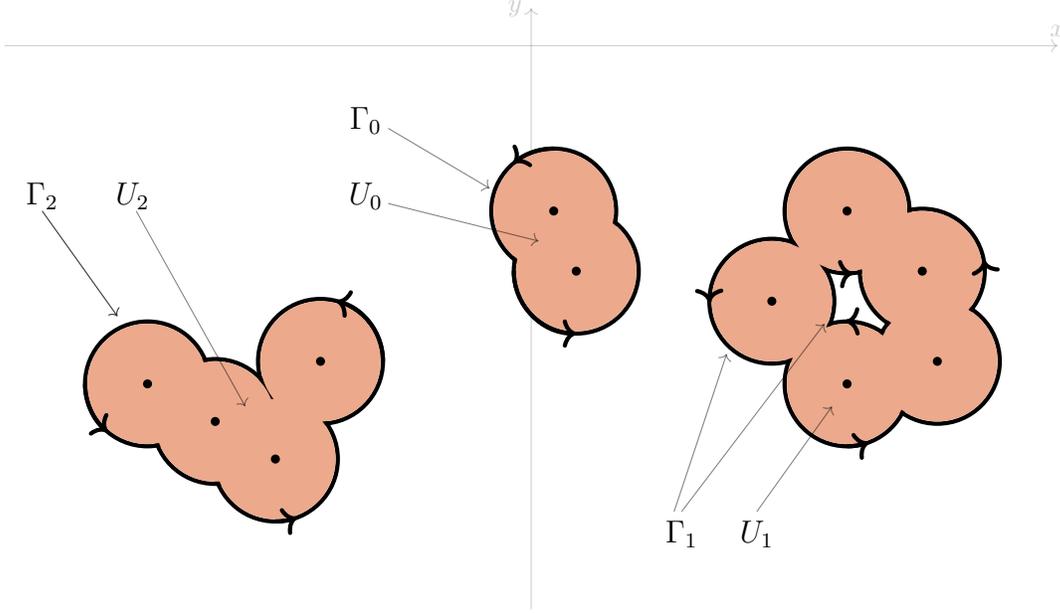

By the construction we have $\Lambda \subset V_r = \cup_{n\ge 1} U_{r, n}$. For every $n\ge 1$ denote $\Lambda_{r,n} = \Lambda \cap U_{r,n}$ and let $\{\lambda_{r,n,0}, \lambda_{r,n,1},\ldots\}$ be the elements of $\Lambda_{r,n}$ enumerated such that $|\lambda_{r,n,0}| \le |\lambda_{r,n,1}|\le\ldots$ holds. Assume that the size of $\Lambda_{r,n}$ is at least $N$. Then for every $k < N$ we have
\begin{gather*}
|\lambda_{r,n,k}| \leq |\lambda_{r,n,0}| + 2 r k.
\end{gather*}
Recall the definition \eqref{eq: xi function def} of the function $\xi$. The previous inequality implies
\begin{gather*}
\xi(|\lambda_{n_0}|)\ge\sum_{k=0}^{N - 1} \frac{1}{|\lambda_{n_k}|} \geq \sum_{k=0}^{N - 1} \frac{1}{|\lambda_{n_1}| + 2rk} \ge\frac{1}{2r} \log \left(1 +\frac{2rN}{|\lambda_{n_0}|}\right).
\end{gather*}
The uniform in $n$ estimate  $\#\Lambda_{r,n} \ls \xi(|\lambda_{r,n,0}|)|\lambda_{r,n,0}|$ follows.
This also shows
\begin{gather}
\label{eq: w estimate in lambda}
|w| \ls |\lambda_{r,n,0}| + 1, \qquad w\in U_{r,n},\qquad n \ge 1.
\end{gather}
The set $U_{r,n}$ is a finite union of circles hence its boundary $\Gamma_{r,n}$ is a subset of a finite number (not greater than $\#\Lambda_{r,n}$) of circles. It follows that
\begin{gather}
    \label{eq: contour estimate}
    \len(\Gamma_{r,n}) \le 2\pi r\cdot \# \Lambda_{r,n}\ls \xi(|\lambda_{r,n,0}|)|\lambda_{r,n,0}| \ls |\lambda_{r,n,0}|,\qquad n\ge 1.
\end{gather}

\subsection{Proof of Theorem \ref{thm: wider domain}}
Let us first proof the theorem under the additional assumption 
\begin{gather}
    \label{eq: assumption large Im}
    \Im \lambda_n \le -2, \qquad n\ge 1.
\end{gather}
We proceed as was described in Section \ref{section: idea}. We need to solve the interpolation problem \eqref{eq: interpolation problem idea}, 
\begin{gather*}
    g\in \PW_\sigma, \qquad g(\lambda) = G(\lambda),\qquad \lambda\in  \Lambda\cup\{0\},
\end{gather*}
where $G(z) = z e^{-i\sigma z}$. Let $U_n = U_{1, n}, \Lambda_n = \Lambda_{n, 1} = \{\lambda_{n, 0}, \lambda_{n, 1}, \ldots\}$ and $\Gamma_n = \Gamma_{n,1}$ be as in the previous section.
For every $n\ge 0$ we define the meromorphic function
\begin{gather*}
    F_n(z) = \sum_{\lambda\in \Lambda_n}\frac{G(\lambda)}{H'(\lambda)(z - \lambda)} 
\end{gather*}
with simple poles in $\Lambda_n$ and rewrite it using relation \eqref{eq: from sum to contour}:
\begin{gather}
\label{eq: f_n and contour again}
    F_n(z) = \frac{1}{2\pi i}\oint_{\Gamma_n}\frac{G(w)\, dw}{H(w)(z - w)}, \qquad z\notin U_n.
\end{gather}
By the construction we have $\dist(w, \Lambda) = \dist(w, \Lambda_n) = 1$ for every $w\in \Gamma_n$. Assumption \eqref{eq: assumption large Im} then implies
\begin{gather*}
    \dist(w, -\Lambda) \ge 1, \qquad |\Im w|\ge 1,\qquad w\in \Gamma_n.
\end{gather*}
Theorem \ref{thm: H lower bound} applies. It gives
\begin{gather*}
    \log|H(w)| \gs -  |w|^{\alpha}\log |w|,\qquad w\in \Gamma_n,\qquad n\ge 0.     
\end{gather*}
Due to the assumption $\Lambda\subset \{z: -\Im z \geq C | z|^\beta\}$ we also know that
\begin{gather*}
    |G(w)| \ls |w|\exp(-C_1|w|^{\beta})
\end{gather*}
holds with some other constant $C_1 > 0$ for all $w\in \Gamma_n$.
Substituting the two latter inequalities into \eqref{eq: f_n and contour again}, we get
\begin{gather*}
    |F_n(z)|\ls \frac{\len(\Gamma_n)}{\dist(z, \Gamma_n)} \sup_{w\in \Gamma_n}e^{-C_2|w|^{\beta}}. 
\end{gather*}
From estimates \eqref{eq: w estimate in lambda} and \eqref{eq: contour estimate} we know the relations $\len(\Gamma_n)\ls |\lambda_{n,0}|$ and $|w|\asymp |\lambda_{n, 0}|$ for all $w\in \Gamma_n$. It follows that the series 
\begin{gather*}
    g(z) = \sum_{n\ge 1} H(z)F_n(z), \qquad z\in \Cm,
\end{gather*}
converges for all $z\in \Cm$ and defines an entire function $g$ that satisfies $|g(z)|\ls |H(z)|$ when $\dist(z, \Gamma_n) > 1$ for all $n\ge 0$. Notice that
\begin{gather*}
    \bigcup_{n\ge 0}\big\{z\colon \dist(z, \Gamma_n)\le 1\big\} = \big\{z\colon \dist(z, \Lambda)\le 2\big\} = V_2,
\end{gather*}
where $V_2$ is defined in \eqref{eq: Vr def}. In the previous section we have shown that $V_2$ has a sparse structure, recall Figure \ref{fig:cluster}.
Maximum modulus principle and the Paley-Wiener theorem give $g\in \PW_\sigma$.  
To finish the proof of the theorem we need to fulfill the assertion $g(0) = 0$ and drop \eqref{eq: assumption large Im}. At this point we proceed as in the proof of Theorem \ref{thm: angle}: construct $\tilde g$ for the set 
\begin{gather*}
    \tilde\Lambda = \{-2i\}\cup\{\lambda_n - 2i,\, n\ge 1\}. 
\end{gather*}
as described above and take $g = \tilde g(\cdot + 2i)$.

\section{Sharpness of the results. Proof of Theorem \ref{thm non res example}}
Resonances of the \Schr operator can be defined as the zeroes of the corresponding de Branges function $E = A + iB$, see \cite{baranov2017branges}. The following lemma is stated in Lemma 3 in the preprint \cite{baranov2015branges}. We reproduce its proof here for the reader's convenience.
\begin{Lem}\label{Lem: phase function}
For every compactly supported potential $q\in L^1(\R_+)$ we have
    \begin{gather*} 
    \sup_{t\in \R} \sum_{s \in \Res(q)} \frac{|\Im s|}{|t-s|^2}  < \infty.
\end{gather*}
\end{Lem}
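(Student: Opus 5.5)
The quantity $\sum_{s}\frac{|\Im s|}{|t-s|^2}$ is, up to a factor, the derivative of the phase of a Hermite--Biehler (de Branges) function on the real line. So I would look for a function $E$ of exponential type, with no zeros in $\ol{\Cm_+}$, whose zeros in $\Cm_-$ are exactly $\Res(q)$, and then bound the derivative of its phase.

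\textbf{Step 1: a de Branges function.} Take $E(k)=w(k)$, or a modification of $w$ by a finite Blaschke/polynomial factor. The Jost function has only finitely many zeros in $\ol{\Cm_+}$: the bound states $i\kappa_j$, and possibly $0$. Replace them by mirror points in $\Cm_-$ exactly as in the proof of Corollary \ref{cor: interpolation}. This changes the sum in the lemma by only finitely many terms, each bounded uniformly in $t$.

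Since $z(w(z)-1)\in\Const+\expPWsigma$, we have $w(z)=1+O(1/|z|)$ in $\ol{\Cm_+}$. Hence $E$ is bounded and bounded away from zero on the real line, outside a compact set. Also $E$ has no zeros in $\ol{\Cm_+}$. On $\R$ we have $|E^\#(x)|=|E(x)|$ with $E^\#(z)=\ol{E(\ol z)}$. For $z\in\Cm_+$, the function $E^\#/E$ is $e^{2i\sigma z}$ times something bounded in $\Cm_+$. So $E$ is of Hermite--Biehler type.

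\textbf{Step 2: Hadamard/Krein factorization.} $E$ is of exponential type and of Cartwright class, because $\log|E|$ is integrable against $dx/(1+x^2)$ on $\R$. By Krein's theorem its zeros $s$ satisfy the Blaschke condition \eqref{eq: blaschke condition}, and
\begin{gather*}
E(z)=c\,e^{-i a z}\,\lim_{R\to\infty}\prod_{|s|<R}\Big(1-\frac{z}{s}\Big).
\end{gather*}
Write $E(x)=|E(x)|e^{-i\varphi(x)}$ on $\R$. Differentiating the logarithm of this product gives the standard phase-derivative formula for Hermite--Biehler functions:
\begin{gather*}
\varphi'(x)=a+\sum_{s}\frac{|\Im s|}{|x-s|^2}.
\end{gather*}
Every term in the sum is positive, so the sum converges and the formula is legitimate.

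\textbf{Step 3: bound $\varphi'$ uniformly.} This is the main point. On the real line $\varphi'(x)=-\Im\big(E'(x)/E(x)\big)$.

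For $|x|$ large, use $E=1+O(1/|x|)$. The derivative $E'$ is likewise $O(1/|x|)$ on $\R$: indeed $E-1=\rho(x)/x$ with $\rho\in\Const+\expPWsigma$ bounded on $\R$, and Bernstein's inequality applied to $\rho$ gives $|\rho'|\le\sigma\|\rho\|_\infty$. Since $|E|\ge 1/2$ for large $|x|$, this gives $|E'/E|\ls 1$ there.

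On a compact interval, $E$ is entire with no real zeros after the modification of Step 1, so $E'/E$ is continuous and hence bounded.

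Together these give $\sup_{t\in\R}\varphi'(t)<\infty$. Therefore
\begin{gather*}
\sup_{t\in\R}\sum_{s}\frac{|\Im s|}{|t-s|^2}\le\sup_{t\in\R}\varphi'(t)<\infty.
\end{gather*}
The finitely many terms altered in Step 1 are each uniformly bounded, so the lemma follows for $\Res(q)$.

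\textbf{Main obstacle.} The most delicate point is justifying the phase-derivative formula, namely convergence of the product representation and the exact linear term $a$. I would handle it via the canonical factorization of $E^\#/E$ in $\Cm_+$ as $e^{2i\sigma z}$ times a Blaschke product. Its boundary argument derivative is exactly $2\sigma+\sum 2|\Im s|/|x-s|^2$, which avoids convergence issues for the Hadamard product.
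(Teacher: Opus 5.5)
Your argument is correct in substance, but it follows a different route from the paper. The paper works with the de Branges function $E=A+iB$ of \cite{baranov2017branges}. It writes $-2i\phi'=(A/B)'\big/\big(1+(A/B)^2\big)$ and imports the two-sided bounds for $A/\sin z$ and $B/\cos z$ from that paper; for $L^1$ potentials these require the zero asymptotics from \cite{Chelkak2010}. It then expands $A/B$ as a Mittag-Leffler series with bounded coefficients $A(\mu_n)/B'(\mu_n)$, and bounds $\phi'$ separately near and away from the zeros of $B$.

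You instead work with the (modified) Jost function itself, whose zeros are $\Res(q)$ by definition here, so no identification of resonances with zeros of a de Branges function is needed. Your only inputs are the Cartwright-class factorization, which yields $\varphi'=a+\sum_s|\Im s|/|x-s|^2$, and the fact that $w(k)=1+\rho(k)/k$ with $\rho$ of exponential type and bounded on $\R$, combined with Bernstein's inequality. These two properties of $\rho$ follow directly from the Volterra equation for $\psi$, so you do not even need the converse of Theorem~\ref{Thm jost functions}. Your argument is shorter and self-contained, and it gives more: $\varphi'(x)=O(1/|x|)$, so the sum in the lemma tends to a constant as $|x|\to\infty$.

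There is one sign slip, harmless for the conclusion. The modified Jost function is \emph{not} Hermite--Biehler. By your own Step~1, $|E|\to1$ in $\Cm_+$, while $|E^\#(iy)|=|E(-iy)|$ grows like $e^{2\kappa y}$, where $2\kappa=\lim_{y\to+\infty}y^{-1}\log|E(-iy)|>0$. Correspondingly, the factorization is $E(z)=C e^{i\kappa z}\lim_{R\to\infty}\prod_{|s|<R}(1-z/s)$. Hence $E^\#/E=c\,e^{-2i\kappa z}B(z)$ with $B$ a Blaschke product in $\Cm_+$; it is $e^{-i\kappa z}E$ that is Hermite--Biehler.

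The linear term in your phase formula is therefore $a=-\kappa$, and the boundary derivative in your last paragraph is $-2\kappa+\sum_s 2|\Im s|/|x-s|^2$. Your final display must read $\sum_s|\Im s|/|t-s|^2=\varphi'(t)+\kappa\le\sup_{\R}\varphi'+\kappa$. As written, $\sup_t\sum_s\le\sup_t\varphi'$ is false, since $\varphi'(x)\to0$ while the sum tends to $\kappa$. You never use the Hermite--Biehler property and you bound $|\varphi'|$ from both sides, so the proof goes through after this correction.
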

\begin{proof}
The phase function of the \Schr operator is defined by the relation $E/\ol{E} = e^{-2i\phi}$, see \cite{baranov2017branges}, it satisfies
\begin{gather*}
    \phi'(t) = \sum_{s \in \Res(q)} \frac{|\Im s|}{|t-s|^2} + a,
\end{gather*}
where $a$ is a real-valued constant. The calculation shows 
    \begin{align}
    \nonumber
        -2i\phi' &= \frac{E'}{E} + \frac{\ol{E'}}{\ol{E}} = \frac{E'\ol{E} - E\ol{E}'}{|E|^2} 
        \\
        \label{eq: phi as A/B}
        &= \frac{AB' - A'B}{A^2 + B^2} = \frac{AB' - A'B}{B^2}\cdot \frac{1}{(A/B)^2 + 1} = \left(\frac{A}{B}\right)'\frac{1}{(A/B)^2 + 1}.
    \end{align}
Let $\{\lambda_n\}_{n\in \Z}$ and $\{\mu_n\}_{n\in \Z}$ be the zeroes of $A$ and $B$ respectively. Lemma 1 in \cite{baranov2017branges} states that for $q\in L^2(\R_+)$ the functions $A$ and $B$ satisfy
\begin{gather}
    \label{eq: A and B asymp}
    \left|\frac{A(z)}{\sin z}\right|\asymp \frac{\dist(z, \{\lambda_n\})}{\dist(z, \pi\Z)}, \qquad \left|\frac{B(z)}{\cos z}\right|\asymp \frac{\dist(z, \{\mu_n\})}{\dist(z, \pi\Z + \pi/2)}.
\end{gather}
We claim that these relations hold for all $q\in L^1(\R_+)$. In \cite{baranov2017branges} the $L^2$ assertion is used in the asymptotic formula (3.3) for $\lambda_n$ and $\mu_n$,
\begin{gather*}
    \lambda_n = \pi n + \frac{C}{n} + \frac{a_n}{n},\qquad \mu_n = \pi_n - \frac{\pi}{2} + \frac{C}{n} + \frac{b_n}{n},\qquad\{a_n\}, \{b_n\}\in \ell^2(\Z_+) \qquad n\ge 1.
\end{gather*}
For $q\in L^1(\R_+)$ the latter holds with $\{a_n\}, \{b_n\}\in \ell^\infty(\Z_+)$, see, e.g., \cite{Chelkak2010}, which is sufficient for the rest of the proof.

Substitute $z = \mu_k$ into \eqref{eq: A and B asymp}. We have
\begin{gather*}
    |A(\mu_k)|\asymp |\sin(\mu_k)|\frac{\dist(\mu_k, \{\lambda_n\})}{\dist(\mu_k, \pi\Z)} \asymp 1,\qquad |B'(\mu_k)|\asymp 1.
\end{gather*}
For every $n\in \Z$ let $v_n = A(\mu_n)/B'(\mu_n)$. This sequence is bounded and we can write 
\begin{gather*}
    \frac{A}{B} = \sum_{n\in \Z}\frac{A(\mu_n)}{B'(\mu_n)}\left(\frac{1}{z - \mu_n} + \frac{1}{\mu_n}\right) = \sum_{n\in \Z}v_n\left(\frac{1}{z - \mu_n} + \frac{1}{\mu_n}\right),
    \\
    \left(\frac{A}{B} \right)' = \sum_{n\in \Z}v_n\frac{1}{(z - \mu_n)^2}.
\end{gather*}
Inequality \eqref{eq: phi as A/B} implies 
\begin{gather*}
    |\phi'(x)|\ls \sum_{n\in \Z}\frac{1}{(x - \mu_n)^2} \cdot \Bigg|1 + \bigg(\sum_{n\in \Z}v_n\left(\frac{1}{z - \mu_n} + \frac{1}{\mu_n}\right)\bigg)^2\Bigg|^{-1}.
\end{gather*}
The second multiplier is always less or equal to $1$ 
hence we get
\begin{gather*}
    |\phi'(x)|\le \sum_{n\in \Z}\frac{1}{(x - \mu_n)^2}\ls 1.
\end{gather*}
Fix small $\eps > 0$.  If $\dist(x, \{\mu_n\})\ge \eps$ this gives $|\phi'(x)|\ls 1$. Otherwise there exists $\mu_k$ such that $|x - \mu_k| = \dist(x, \{\mu_n\})\le \eps$. In this case we have
\begin{gather*}
    \sum_{n\in \Z}\frac{1}{(x - \mu_n)^2}\asymp \frac{1}{(x - \mu_k)^2},
    \qquad 
    \left|\sum_{n\in \Z}v_n\left(\frac{1}{z - \mu_n} + \frac{1}{\mu_n}\right)\right|\asymp \frac{1}{|z - \mu_k|},
\end{gather*}
which again implies $|\phi'(x)|\ls 1$. The proof is concluded.
\end{proof}

\begin{proof}[Proof of Theorem \ref{thm non res example}]
First of all we notice that it is sufficient to prove theorem only when $\rho$ satisfies $\rho(r) = o(r)$ as $r\to\infty$. Indeed, for every compactly supported $q\in L^1(\R_+)$, the counting function $N$ of $\Res(q)$ satisfies $N(r) = O(r)$ as $r\to\infty$ hence we may assume $\rho(r) = O(r)$ and $\tau(r) = o(\rho(r)) = o(r)$ as $r\to\infty$. Next, if $\rho(r) = o(r)$ does not hold we can prove the theorem with $\tilde\rho(r) = \sqrt{\rho(r)\tau(r)}$ instead of $\rho$.

Consider another function $ \kappa(r) = \frac{1}{2}(\rho(r) + \tau(r)) $ 
and define $r_n$ as the solution of $ \kappa(r_n) = n $ for all $n\ge \kappa(0)$. The assertion $\kappa(r) = o(r)$ as $r\to\infty$ implies
\begin{gather}
    \label{eq: n/r_n limit}
    \lim_{n \to \infty} \frac{n}{r_n} = \lim_{n \to \infty} \frac{\kappa(r_n)}{r_n} = 0.
\end{gather}
We also have $\tau(r) = o(r)$ as $r\to\infty$ hence for all large $n$ there exists a unique $z_n\in\Cm$ that satisfies
\begin{gather*}
    |z_n| = r_n, \qquad \Im z_n = - \tau(r_n), \qquad \Re z_n > 0.
\end{gather*}
We construct an increasing sequence $\{n_0, n_1, \ldots\}$ inductively. Choose $n_0 $ for which $ z_{n_0} $ is well-defined.
Next, for every $ k \geq 1 $ select  $ n_k > n_{k - 1} $ such that $ 2^k n_k \leq r_{n_k} $ and
\begin{gather*}
n_0 + \cdots + n_{k-1} \le \rho(r_{n_k}) - \kappa(r_{n_k}).
\end{gather*}
Such $n_k$ exists for every $k\ge 1$ because of \eqref{eq: n/r_n limit} and
\begin{gather*}
\lim_{n \to \infty} \left( \rho(r_n) - \kappa(r_n) \right) = \frac{1}{2} \lim_{n \to \infty} \left( \rho(r_n) - \tau(r_n) \right) = +\infty.
\end{gather*}
We form the multiset $\Lambda = \{ z_{n_k} \colon k \geq 0 \}$, where $z_{n_k}$ appears with multiplicity $n_k$. Equivalently, we can replace each instance of $z_{n_k}$ with $n_k$ distinct points arbitrarily close to $z_{n_k}$. We have
\begin{gather*}
\sum_{\lambda \in \Lambda} \frac{1}{|\lambda|} = \sum_{k=0}^{\infty} \frac{n_k}{|z_{n_k}|} = \sum_{k=0}^{\infty} \frac{n_k}{r_{n_k}} \leq \sum_{k=0}^{\infty} \frac{1}{2^k} < \infty.
\end{gather*}
Furthermore, we can estimate the counting function as follows:
\begin{gather*}
n(r_{n_k}) = n_0 + \cdots + n_k \le \rho(r_{n_k}) - \kappa(r_{n_k}) + n_k = \rho(r_{n_k}).
\end{gather*}
The inequality $ n(r) \leq \rho(r) $ for all $ r \geq 0 $ follows.
\medskip 

Assume that $ \Lambda \subset \Res(q) $ for some compactly supported $ q \in L^1(\R_+) $. By Lemma \ref{Lem: phase function} the function
\begin{gather*}
p(t) = \sum_{s \in \Lambda} \frac{|\Im s|}{|t-s|^2}
\end{gather*}
is uniformly bounded for $ t \in \R$. On the other hand, we have
\begin{gather*}
p(\Re z_{n_k}) = \sum_{s \in \Lambda} \frac{|\Im s|}{|\Re z_{n_k} -s|^2} \geq \frac{n_k |\Im z_{n_k}|}{|\Re z_{n_k} - z_{n_k}|^2} = \frac{n_k}{|\Im z_{n_k}|} = \frac{\kappa(r_{n_k})}{\tau(r_{n_k})}\to\infty,\qquad n\to\infty.
\end{gather*}
This contradiction shows that $ \Lambda $ is not a subset of $ \Res(q) $.
\end{proof}

\bibliographystyle{plain} 
\bibliography{ref}

\end{document}